\newtheorem{theorem}{Theorem}[section]
\newtheorem{corollary}[theorem]{Corollary}
\newtheorem{lemma}[theorem]{Lemma}
\theoremstyle{definition}
\theoremstyle{remark}
\numberwithin{equation}{section}
\DeclareMathOperator{\RE}{Re}
\begin{document}
%\fontsize{14pt}{16pt}\selectfont

\title{Radius Constants of Sigmoid Starlike Functions}
\thanks{$^*$Corresponding Author\\The first author is supported by The Council of Scientific and Industrial Research(CSIR). Ref.No.:08/133(0018)/2017-EMR-I.}
\author{Priyanka Goel}
\address{Department of Applied Mathematics, Delhi Technological University, Delhi--110042, India}
\email{priyanka.goel0707@gmail.com}
\author[S. S. Kumar]{S. Sivaprasad Kumar$^*$}
\address{Department of Applied Mathematics, Delhi Technological University, Delhi--110042, India}
\email{spkumar@dce.ac.in}

\subjclass[2020]{30C45, 30C80}

\keywords{Radius problem, Sigmoid function, Starlike functions}
%\let\thefootnote\relax\footnotetext{%*Corresponding author}
%\thanks{The work presented here was supported by a Research Fellowship
%from Delhi Technological University, New Delhi.}
\begin{abstract}
In the present investigation, we study the class of Sigmoid starlike functions, given by
$\mathcal{S}^*_{SG}=\{f\in\mathcal{A}: {zf'(z)}/{f(z)}\prec 2/(1+e^{-z})\}$
in context of estimating the sharp radius constants associated with several known subclasses of starlike functions. Further, graphical validation for the sharpness of results is also provided.
\end{abstract}

\maketitle

\section{Introduction}
Let $\mathcal{A}_n$ be the class of analytic functions defined on $\mathbb{D}:=\{z\in\mathbb{C}:|z|<1\}$, satisfying $f(0)=0,\;f'(0)=1$  and
of the form
\begin{equation*}
f(z)=z+a_{n+1}z^{n+1}+a_{n+2}z^{n+2}+\cdots,\quad n\in\mathbb{N}.
\end{equation*}
We denote by $\mathcal{A}:=\mathcal{A}_1$ and let $\mathcal{S}\subset\mathcal{A}$ be the class of univalent functions. For the functions $f$ and $F,$ we say that $f$ is subordinate to $F,$ written as $f\prec F,$ if it is possible to write $f(z)=F(\omega(z))$ for some Schwarz function $\omega.$ Let $\mathcal{S}^*$ and $\mathcal{C}$ denote respectively the class of starlike and convex functions. Note that if we consider $f\in\mathcal{A}_n,$ the class of starlike and convex functions are denoted by $\mathcal{S}^*_n$ and $\mathcal{C}_n$ respectively. For $n\in\mathbb{N},$ we now define the Carath\'{e}odory class $\mathcal{P}_n,$ containing functions of the form $p(z)=1+c_nz^n+c_{n+1}z^{n+1}+\cdots$ with $\RE(p(z))>0$ on $\mathbb{D}.$ Using subordination, Ma and Minda~\cite{maminda} defined a general subclass of starlike functions, given by $\mathcal{S}^*(\phi):=\{f\in\mathcal{A}:zf'(z)/f(z)\prec\phi(z)\}.$ For different choices of $\phi,$ authors have defined several subclasses of $\mathcal{S}^*$ and examined these classes for different geometric properties. These classes will be described in the text wherever needed. In 2020, we introduced the class of Sigmoid starlike functions by taking $\phi(z)=2/(1+e^{-z}),$ the Modified Sigmoid function and denote it by $\mathcal{S}^*_{SG}$~\cite{first}. The image of $\mathbb{D}$ under the Modified Sigmoid function is denoted by $\Delta_{SG}:=\{w\in\mathbb{C}:|\log(w/(2-w))|<1\}$. In~\cite{first}, we present some basic geometry of this function, prove several inclusion relationships, obtain some coefficient bounds, and study mainly first-order differential subordination results. Later in~\cite{third}, we proved various second and third order differential subordination results for Sigmoid starlike functions. Soon the class gained popularity and attracted many authors to study further in context of various aspects such as coefficient problems and convolution results (see~\cite{ghaff1,ghaff2}). In continuation of these works, we now investigate $\mathcal{S}^*_{SG}$ for radius problems. We present in this paper, radius estimates for $\mathcal{S}^*_{SG}$ in conjunction with a bunch of other subclasses of starlike functions. Further, we consider certain families of analytic functions, which are characterized by the ratio of its functions with a specific function $g$ and obtain $\mathcal{S}^*_{SG}$- radius for these families. We extensively use the following lemma in order to prove our main results:
\begin{lemma}\cite{first}\label{main}
	Let $2/(1+e)<a<2e/(1+e)$. If  $$r_a=\dfrac{e-1}{e+1}-|a-1|,$$ then
	\begin{equation}\label{largestdisk}
		\{w\in\mathbb{C}:|w-a|<r_a\}\subset\Delta_{SG}.
	\end{equation}
\end{lemma}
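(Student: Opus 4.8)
The plan is to reduce the statement to the single concentric case $a=1$ and then exploit the power series of the map inverting the Modified Sigmoid function. Throughout write $\rho:=(e-1)/(e+1)$, and note first that the hypothesis $2/(1+e)<a<2e/(1+e)$ says precisely that $a\in(1-\rho,\,1+\rho)$, i.e. $|a-1|<\rho$, so that $r_a=\rho-|a-1|>0$. For any $w$ with $|w-a|<r_a$ the triangle inequality gives $|w-1|\le|w-a|+|a-1|<r_a+|a-1|=\rho$. Hence $\{w:|w-a|<r_a\}\subset\{w:|w-1|<\rho\}$, and it suffices to establish the single inclusion $\{w:|w-1|<\rho\}\subset\Delta_{SG}$.

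For this centered disk I would substitute $w=1+\zeta$, so that $|\zeta|<\rho$ and $w/(2-w)=(1+\zeta)/(1-\zeta)$. Since $|\zeta|<\rho<1$, the Cayley-type image $(1+\zeta)/(1-\zeta)$ lies in the right half-plane, so the principal logarithm is analytic there and is represented by the odd power series $\log\frac{1+\zeta}{1-\zeta}=2\sum_{k=0}^{\infty}\zeta^{2k+1}/(2k+1)$ on $|\zeta|<1$. Because all coefficients are positive, the triangle inequality is sharp on the positive real axis, giving
\begin{equation*}
\left|\log\frac{w}{2-w}\right|
=2\left|\sum_{k=0}^{\infty}\frac{\zeta^{2k+1}}{2k+1}\right|
\le 2\sum_{k=0}^{\infty}\frac{|\zeta|^{2k+1}}{2k+1}
=\log\frac{1+|\zeta|}{1-|\zeta|}
<\log\frac{1+\rho}{1-\rho}=\log e=1,
\end{equation*}
where the strict step uses that $t\mapsto\log\frac{1+t}{1-t}$ is strictly increasing together with $|\zeta|<\rho$, and the final identity is the elementary computation $(1+\rho)/(1-\rho)=e$. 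Thus $w\in\Delta_{SG}$, which proves $\{w:|w-1|<\rho\}\subset\Delta_{SG}$ and, combined with the nesting of the first paragraph, yields \eqref{largestdisk}.

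The step I expect to carry the real content is spotting the substitution $w=1+\zeta$ and recognizing $\log\frac{1+\zeta}{1-\zeta}$ as an odd series with positive coefficients: this is exactly what forces the modulus over a circle $|\zeta|=t$ to be maximized at the positive real point, collapsing a two-variable extremal problem over the boundary curve $\partial\Delta_{SG}$ into a one-line monotonicity estimate. Everything else is bookkeeping, the only points needing a word of care being that the reduction covers precisely the admissible range of $a$ (via $|a-1|<\rho$) and that the principal branch of the logarithm is legitimate on the relevant half-plane. I would also remark that the estimate is sharp: equality $|\log(w/(2-w))|=1$ holds at $\zeta=\pm\rho$, i.e. at the real boundary points $w=2e/(1+e)$ and $w=2/(1+e)$, so the disk cannot be enlarged.
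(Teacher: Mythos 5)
Your proof is correct at every step, but there is a wrinkle in the comparison: this paper contains no proof of Lemma~\ref{main} at all --- the statement is imported verbatim from \cite{first}, so the only thing to compare against is the proof in that cited source, where (as is customary for disk-inclusion lemmas of this type throughout the radius-problem literature) the containment is obtained by parametrizing the boundary curve $w(t)=2/(1+e^{-e^{it}})$ and minimizing the distance $|w(t)-a|$ over $t$ for each admissible real center $a$, a calculus computation. Your route is genuinely different and self-contained. It rests on two observations: first, because $r_a=\rho-|a-1|$ with $\rho=(e-1)/(e+1)$, the triangle inequality collapses every off-center disk into the single concentric disk $\{|w-1|<\rho\}$, so only the case $a=1$ needs an argument (this reduction is exactly the $1$-Lipschitz property of the distance to $\partial\Delta_{SG}$, and it is available only because $r_a$ has this special linear form in $|a-1|$); second, the substitution $w=1+\zeta$ converts the defining inequality of $\Delta_{SG}$ into $\bigl|\log\bigl((1+\zeta)/(1-\zeta)\bigr)\bigr|<1$, i.e. $2|\tanh^{-1}\zeta|<1$, and the positivity of the odd Taylor coefficients of $\log\bigl((1+\zeta)/(1-\zeta)\bigr)$ yields the majorization by $\log\bigl((1+|\zeta|)/(1-|\zeta|)\bigr)$, which equals $1$ exactly at $|\zeta|=\rho=\tanh(1/2)$. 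Your branch-of-logarithm care is in order (the Cayley image lies in the right half-plane), and your closing sharpness remark is also correct, since $w=2/(1+e)$ and $w=2e/(1+e)$ lie on $\partial\Delta_{SG}$. What your approach buys is a calculus-free, essentially one-line estimate that makes sharpness transparent; what it costs is generality, since it hinges both on the coefficient positivity of the inverse map and on the specific form of $r_a$, neither of which is available for the analogous lemmas (for example for the domains $|\log w|<1$ or $|w^2-1|<1$) used elsewhere in this circle of problems, where the boundary-minimization method remains the workhorse.
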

\section{Main Results}
To begin with, we consider the class $\mathcal{S}^*_n[A,B]\;(-1\leq B<A\leq1),$ introduced by Janowski~\cite{janowski}, as below.
\begin{equation*}
\mathcal{S}^*_n[A,B]:=\left\{f\in\mathcal{A}_n:\frac{zf'(z)}{f(z)}\prec\frac{1+Az}{1+Bz}\right\}.
\end{equation*}
Corresponding to the above class, we consider $\mathcal{P}_n[A,B]$ and a few results related to this class, given as follows:
\begin{equation*}
\mathcal{P}^*_n[A,B]:=\left\{f\in\mathcal{P}_n:p(z)\prec\frac{1+Az}{1+Bz}\right\}.
\end{equation*}
\begin{lemma}~\cite[Lemma~2.1]{Plemma}\label{Plemma}
\begin{itemize}
\item[$(i)$] 	If $p\in\mathcal{P}_n[A,B]$, then for $|z|=r$, $$\left|p(z)-\dfrac{1-ABr^{2n}}{1-B^2r^{2n}}\right|\leq \dfrac{(A-B)r^n}{1-B^2r^{2n}}.$$
\item[$(ii)$]
	In particular, if $p\in\mathcal{P}_n(\alpha):=\mathcal{P}[1-2\alpha, -1]$, then
	$$\left|p(z)-\dfrac{1+(1-2\alpha)r^{2n}}{1-r^{2n}}\right|\leq \dfrac{2(1-\alpha)r^n}{1-r^{2n}}.$$
\end{itemize}
\end{lemma}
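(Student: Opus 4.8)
The plan is to exploit the subordination defining $\mathcal{P}_n[A,B]$ together with the fact that a bilinear map carries disks to disks. First I would record that $p\in\mathcal{P}_n[A,B]$ means $p(z)=(1+Aw(z))/(1+Bw(z))$ for some Schwarz function $w$ with $w(0)=0$. Since $p(z)=1+c_nz^n+\cdots$ and the M\"{o}bius map $T(\zeta)=(1+A\zeta)/(1+B\zeta)$ satisfies $T(\zeta)=1+(A-B)\zeta+O(\zeta^2)$ with $A-B>0$, comparing Taylor coefficients forces $w$ to vanish to order $n$ at the origin, i.e.\ $w(z)=w_nz^n+\cdots$. Then $h(z):=w(z)/z^n$ is analytic on $\mathbb{D}$, and applying the maximum modulus principle to $h$ on the circles $|z|=\rho$ and letting $\rho\to1$ yields $|h(z)|\le1$, hence the bound $|w(z)|\le|z|^n=r^n$ on $|z|=r$.

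With this in hand, the value $p(z)=T(w(z))$ lies in the image $T(\set{|\zeta|\le r^n})$. The second step is the elementary but central observation that $T$, being a bilinear transformation with real coefficients, maps the disk $\set{|\zeta|\le r^n}$ onto a disk symmetric about the real axis. I would locate this image disk by evaluating $T$ at the two real endpoints $\zeta=\pm r^n$: the center is the midpoint $\tfrac12(T(r^n)+T(-r^n))$ and the radius is $\tfrac12\left|T(r^n)-T(-r^n)\right|$. A short computation of these two quantities gives center $(1-ABr^{2n})/(1-B^2r^{2n})$ and radius $(A-B)r^n/(1-B^2r^{2n})$, which is precisely the inequality in part $(i)$.

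Part $(ii)$ then requires no new work: substituting $A=1-2\alpha$ and $B=-1$ into the center and radius obtained above yields $(1+(1-2\alpha)r^{2n})/(1-r^{2n})$ and $2(1-\alpha)r^n/(1-r^{2n})$ respectively, matching the stated estimate for $\mathcal{P}_n(\alpha)$. I expect the only genuinely delicate point to be the order-$n$ vanishing of the Schwarz function and the ensuing bound $|w(z)|\le r^n$; everything else is the standard computation of the image of a disk under a bilinear map. One should also note that $1-B^2r^{2n}>0$ holds automatically since $|B|\le1$ and $r<1$, so that the image is a bounded disk and the estimates are meaningful.
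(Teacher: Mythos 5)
The paper offers no proof of this lemma at all: it is quoted from \cite[Lemma~2.1]{Plemma}, so there is no internal argument to compare yours against. Judged on its own, your proof is correct and complete. The reduction of the subordination to $p=T\circ w$ with $T(\zeta)=(1+A\zeta)/(1+B\zeta)$, the observation that $p-1=(A-B)w+O(w^2)$ forces $w$ to vanish to order $n$ (so that $|w(z)|\le r^n$ by the Schwarz-lemma argument applied to $w(z)/z^n$), and the remark that the pole $\zeta=-1/B$ of $T$ lies outside $|\zeta|\le r^n$ because $|B|\le 1$, are all sound; and since $T$ has real coefficients, the image of $\{|\zeta|\le r^n\}$ is a closed disk symmetric about $\mathbb{R}$ meeting the real axis exactly at $T(\pm r^n)$, so your midpoint/half-distance computation correctly yields the center $(1-ABr^{2n})/(1-B^2r^{2n})$ and radius $(A-B)r^n/(1-B^2r^{2n})$; part $(ii)$ is then pure substitution, as you say. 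For comparison, the classical proof in the cited source proceeds algebraically rather than geometrically: one solves the subordination for the Schwarz function, $w=(p-1)/(A-Bp)$, so that $|p(z)-1|\le r^n|A-Bp(z)|$, and completes the square in this Apollonius-type inequality to exhibit the same disk. Your route buys a computation-free identification of the image disk, at the price of invoking the mapping properties of M\"obius transformations (disk to disk, symmetry, diametral points on $\mathbb{R}$); the algebraic route is longer to expand but needs nothing beyond squaring moduli. Both are valid, and your stated estimates match the lemma exactly.
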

\begin{lemma}~\cite[Lemma~2]{lemma(b)}\label{lemmab}
If $p\in\mathcal{P}_n(\alpha),$ then for $|z|=r,$
\begin{equation*}
\left|\frac{zp'(z)}{p(z)}\right|\leq \frac{2nr^n(1-\alpha)}{(1-r^n)(1+(1-2\alpha)r^n)}.
\end{equation*}
\end{lemma}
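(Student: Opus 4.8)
The plan is to reduce the problem to a sharp derivative estimate on the Carath\'eodory class $\mathcal{P}_n$ and then to optimize. Since $\mathcal{P}_n(\alpha)=\mathcal{P}_n[1-2\alpha,-1]$ consists of functions whose image lies in the half-plane $\RE w>\alpha$, I would first set $q(z)=(p(z)-\alpha)/(1-\alpha)$. The normalization $p(z)=1+c_nz^n+\cdots$ forces $q(z)=1+\cdots$ with vanishing coefficients up to order $n-1$ and $\RE q>0$, so $q\in\mathcal{P}_n$, and $p(z)=(1-\alpha)q(z)+\alpha$. Then $p'(z)=(1-\alpha)q'(z)$, whence $\frac{zp'(z)}{p(z)}=\frac{(1-\alpha)\,zq'(z)}{(1-\alpha)q(z)+\alpha}$. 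Taking moduli and using the trivial bound $|p(z)|\ge\RE p(z)=(1-\alpha)\RE q(z)+\alpha$ in the denominator reduces everything to controlling $|zq'(z)|$ from above and $\RE q(z)$ from the right.

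The technical heart is the sharp estimate $|zq'(z)|\le \frac{2nr^n}{1-r^{2n}}\,\RE q(z)$ for $q\in\mathcal{P}_n$. I would derive this from the representation $q=(1+\omega)/(1-\omega)$, where $\omega$ is a Schwarz function vanishing to order at least $n$ at the origin (this order condition is precisely the vanishing of $c_1,\dots,c_{n-1}$). A direct computation gives $zq'(z)=\frac{2\,z\omega'(z)}{(1-\omega(z))^2}$ and $\RE q(z)=\frac{1-|\omega(z)|^2}{|1-\omega(z)|^2}$, so that $\frac{|zq'(z)|}{\RE q(z)}=\frac{2\,|z\omega'(z)|}{1-|\omega(z)|^2}$. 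The estimate then follows from the generalized (higher-order) Schwarz--Pick inequality $\frac{|z\omega'(z)|}{1-|\omega(z)|^2}\le\frac{n r^n}{1-r^{2n}}$, which is sharp, with equality for $\omega(z)=z^n$.

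Combining the pieces, I obtain
\[
\left|\frac{zp'(z)}{p(z)}\right|\le\frac{2nr^n(1-\alpha)}{1-r^{2n}}\cdot\frac{\RE q(z)}{(1-\alpha)\RE q(z)+\alpha}.
\]
For $\alpha\in[0,1)$ the map $s\mapsto s/((1-\alpha)s+\alpha)$ is nondecreasing on $s>0$, so the right-hand side is maximized by taking $\RE q(z)$ as large as possible. Applying Lemma~\ref{Plemma}$(ii)$ with $\alpha=0$ to $q\in\mathcal{P}_n$ gives the disk bound whose rightmost point yields $\RE q(z)\le\frac{1+r^n}{1-r^n}$. Substituting this extreme value and simplifying, using $1-r^{2n}=(1-r^n)(1+r^n)$ and $(1-\alpha)(1+r^n)+\alpha(1-r^n)=1+(1-2\alpha)r^n$, collapses the bound exactly to $\frac{2nr^n(1-\alpha)}{(1-r^n)(1+(1-2\alpha)r^n)}$, as required.

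The main obstacle is the generalized Schwarz--Pick step: the ordinary Schwarz--Pick lemma only gives $\frac{|z\omega'(z)|}{1-|\omega(z)|^2}\le\frac{r}{1-r^2}$, and upgrading the exponent from $1$ to $n$ requires genuinely exploiting that $\omega$ vanishes to order $n$ at the origin (equivalently, the vanishing coefficients $c_1,\dots,c_{n-1}$ of $p$). Once that sharp higher-order estimate is in hand, the remainder is only the monotonicity argument in $\RE q(z)$ together with the algebraic simplification above.
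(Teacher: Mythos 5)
Your proof is correct, but there is nothing in the paper to compare it against: the paper states this lemma purely as a quotation of Lemma~2 of Shah~\cite{lemma(b)} and supplies no proof, so any complete derivation is necessarily ``different'' from the paper's treatment. Your route checks out at every step. The reduction $p=(1-\alpha)q+\alpha$ with $q\in\mathcal{P}_n$ preserves both the normalization and the vanishing of the coefficients of $z,\dots,z^{n-1}$; the Schwarz representation $q=(1+\omega)/(1-\omega)$, with $\omega$ vanishing to order at least $n$, gives exactly
\begin{equation*}
\frac{|zq'(z)|}{\RE q(z)}=\frac{2|z\omega'(z)|}{1-|\omega(z)|^2};
\end{equation*}
the map $s\mapsto s/((1-\alpha)s+\alpha)$ has derivative $\alpha/((1-\alpha)s+\alpha)^2\geq 0$, so substituting the extreme value $\RE q(z)\leq(1+r^n)/(1-r^n)$ (the rightmost point of the disk in Lemma~\ref{Plemma}$(ii)$ with $\alpha=0$) is legitimate; and the closing algebra $(1-\alpha)(1+r^n)+\alpha(1-r^n)=1+(1-2\alpha)r^n$ collapses the bound to the stated one. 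The single ingredient you invoke without proof, the higher-order Schwarz--Pick inequality $|z\omega'(z)|/(1-|\omega(z)|^2)\leq nr^n/(1-r^{2n})$ for Schwarz functions with an $n$-fold zero, is a genuine classical fact and is correctly stated (sharp, with extremal $\omega(z)=z^n$); if you want full self-containment, write $\omega(z)=z^n\phi(z)$ with $|\phi|\leq1$, apply the ordinary Schwarz--Pick inequality to $\phi$, and observe that the difference of the two sides, with $t=|\phi(z)|\in[0,1]$, factors as $(1-t)$ times an expression linear in $t$ that is nonnegative at both endpoints $t=0$ and $t=1$ (the case $t=1$ reduces, after pairing terms of the geometric sum $1-r^{2n}=(1-r^2)\sum_{k=0}^{n-1}r^{2k}$, to $(1-r^{2k+1})(1-r^{2n-2k-1})\geq0$). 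A pleasant by-product of your argument, which a bare citation cannot give, is that it simultaneously proves sharpness: for $p(z)=(1+(1-2\alpha)z^n)/(1-z^n)$, i.e.\ $\omega(z)=z^n$, every inequality in your chain becomes an equality at $z=r$.
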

\begin{theorem}\label{Sn_ab_1}
		The sharp $\mathcal{S}^*_{SG,n}$-radius of the class $\mathcal{S}^*_n[A,B]$ is given by
		\begin{itemize}
			\item[$(i)$]
			
			$R_{\mathcal{S}^*_{SG,n}}(\mathcal{S}^*_n[A,B]) = \min \left\{1,\; \left(\frac{e-1}{A(1+e)-2B}\right)^{\frac{1}{n}} \right\},$ when $0\leq B < A \leq 1.$
			
			\item[$(ii)$]
			
		$R_{\mathcal{S}^*_{SG,n}}(\mathcal{S}^*_n[A,B]) = \min \left\{1,\; \left(\frac{e-1}{A(1+e)-2Be}\right)^{\frac{1}{n}} \right\},$ when $-1 \leq B < A \leq 1$ with $B\leq 0.$
			
		\end{itemize}
		In particular, for the class $\mathcal{S}^*$, we have $R_{\mathcal{S}^*_{SG}}(\mathcal{S}^*)=(e-1)/(3e+1)$.
	\end{theorem}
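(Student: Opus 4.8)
The plan is to reduce the subordination $zf'(z)/f(z)\prec(1+Az)/(1+Bz)$ to a disk condition and then invoke Lemma~\ref{main} to force that disk into $\Delta_{SG}$. Concretely, if $f\in\mathcal{S}^*_n[A,B]$, then $p(z):=zf'(z)/f(z)$ lies in $\mathcal{P}_n[A,B]$, so by Lemma~\ref{Plemma}$(i)$ the values $p(z)$ on $|z|=r$ are confined to the disk centered at $a(r):=(1-ABr^{2n})/(1-B^2r^{2n})$ of radius $\rho(r):=(A-B)r^n/(1-B^2r^{2n})$. To conclude $f\in\mathcal{S}^*_{SG,n}$, it suffices that this entire disk lie inside $\Delta_{SG}$. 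By Lemma~\ref{main}, the disk $\{|w-a|<r_a\}$ with $r_a=(e-1)/(e+1)-|a-1|$ is contained in $\Delta_{SG}$ whenever $a$ is real and suitably close to $1$; hence the containment I need is guaranteed as soon as
\begin{equation*}
\rho(r)\leq r_{a(r)}=\frac{e-1}{e+1}-|a(r)-1|.
\end{equation*}

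First I would compute $a(r)-1$. For case $(i)$ with $0\leq B<A\leq1$ one checks that $a(r)\geq1$, so $|a(r)-1|=a(r)-1=(B^2-AB)r^{2n}/(1-B^2r^{2n})=B(B-A)r^{2n}/(1-B^2r^{2n})$, which is $\leq0$ — so in fact I should verify the sign carefully; depending on signs $a(r)$ may be $\leq 1$, giving $|a(r)-1|=(A-B)Br^{2n}/(1-B^2r^{2n})$. Substituting into the inequality $\rho(r)+|a(r)-1|\leq(e-1)/(e+1)$ and clearing the common denominator $1-B^2r^{2n}$, the $r^{2n}$ terms should combine so that the condition collapses to a clean linear-in-$r^n$ inequality, yielding the threshold $r^n\leq(e-1)/(A(1+e)-2B)$. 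The case split between $(i)$ and $(ii)$ arises precisely because the sign of $B$ controls whether the extremal boundary value sits on the near side or far side of $1$, producing the two different denominators $A(1+e)-2B$ versus $A(1+e)-2Be$. I would handle $(ii)$ by the same substitution, tracking that $B\leq0$ flips the relevant sign.

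For sharpness I would exhibit the extremal function $f_0$ defined by $zf_0'(z)/f_0(z)=(1+Az)/(1+Bz)$, i.e. the Janowski extremal, and check that at the boundary point $z=-r$ (or $z=r$, whichever realizes the extreme of Lemma~\ref{Plemma}$(i)$) the value $p(z)$ lands exactly on $\partial\Delta_{SG}$ when $r$ equals the claimed radius; this shows the radius cannot be enlarged. The particular case $\mathcal{S}^*=\mathcal{S}^*_1[1,-1]$ follows by setting $n=1$, $A=1$, $B=-1$ in part $(ii)$: the denominator becomes $A(1+e)-2Be=(1+e)+2e=3e+1$, giving $R_{\mathcal{S}^*_{SG}}(\mathcal{S}^*)=(e-1)/(3e+1)$, and since this is less than $1$ the minimum is attained by the second argument.

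The main obstacle I anticipate is twofold. First, applying Lemma~\ref{main} requires that the center $a(r)$ actually satisfy the hypothesis $2/(1+e)<a(r)<2e/(1+e)$ for all $r$ up to the claimed radius; I would need to confirm this holds automatically at the threshold and is not an extra binding constraint — i.e. that the disk-containment inequality is the operative one. Second, the algebra of reducing $\rho(r)+|a(r)-1|\leq(e-1)/(e+1)$ to the stated closed form must be done with care, since the presence of $r^{2n}$ in both numerator and denominator means a naive clearing of denominators could introduce spurious roots; I expect the correct manipulation is to show the inequality is equivalent to a factor of the form $\bigl((e-1)-(A(1+e)-2B)r^n\bigr)\cdot(\text{positive})\geq0$, so that the sign is governed entirely by the linear factor, giving the sharp threshold cleanly.
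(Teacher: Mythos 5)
Your proposal follows the paper's proof essentially verbatim: Lemma~\ref{Plemma}$(i)$ gives the disk, Lemma~\ref{main} gives the containment condition $\rho(r)+|a(r)-1|\leq(e-1)/(e+1)$, the sign of $B$ determines whether $a(r)\leq 1$ (case $(i)$, since $B\geq 0$ and $B<A$ give $B(B-A)\leq0$) or $a(r)\geq 1$ (case $(ii)$), and in each case clearing denominators produces a common factor $1\pm Br^n>0$ whose cancellation leaves exactly the linear threshold $r^n\leq(e-1)/(A(1+e)-2B)$ resp.\ $(e-1)/(A(1+e)-2Be)$ --- the factorization you anticipated is precisely what happens. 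Your first worry is also vacuous: the hypothesis $2/(1+e)<a(r)<2e/(1+e)$ of Lemma~\ref{main} is automatic, because the containment inequality with $\rho(r)>0$ already forces $|a(r)-1|<(e-1)/(e+1)$.

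The one genuine defect is in your sharpness argument. The theorem is stated for $\mathcal{S}^*_n[A,B]\subset\mathcal{A}_n$, but the Janowski extremal $f_0$ with $zf_0'(z)/f_0(z)=(1+Az)/(1+Bz)$ has nonzero coefficient of $z^2$, so it lies in $\mathcal{A}_1$ but not in $\mathcal{A}_n$ for $n\geq 2$, and hence cannot witness sharpness there. You need the $\mathcal{A}_n$-version used in the paper, $f_{A,B}(z)=z(1+Bz^n)^{(A-B)/(nB)}$ (and $z\exp(Az^n/n)$ when $B=0$), for which $zf_{A,B}'(z)/f_{A,B}(z)=(1+Az^n)/(1+Bz^n)$; at $r$ equal to the claimed radius this quantity hits the boundary point $2/(1+e)$ of $\Delta_{SG}$ at $z^n=-r^n$ in case $(i)$, and $2e/(1+e)$ at $z^n=r^n$ in case $(ii)$, which is the touching you describe. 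With that replacement your argument is complete and coincides with the paper's.
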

\begin{proof}
		Let $f \in \mathcal{S}^*_n[A,B]$. Using Lemma~\ref{Plemma}, we have
		\begin{equation}\label{snab}
		\left| \frac{zf'(z)}{f(z)} -\frac{1-ABr^{2n}}{1-B^2r^{2n}} \right| \leq \frac{(A-B)r^n}{1-B^2r^{2n}}.
		\end{equation}
		{\bf(i)} If $0\leq B< A\leq1$, then
		
		$$a := \frac{1-ABr^{2n}}{1-B^2r^{2n}}\leq 1.$$
		Further, we observe that $f\in\mathcal{S}^*_{SG,n}$ if the disk~\eqref{snab} is contained in the disk~\eqref{largestdisk}. Thus by using Lemma~\ref{main}, it suffices to show that
		$$\frac{(A-B)r^n}{1-B^2r^{2n}}\leq \frac{1-ABr^{2n}}{1-B^2r^{2n}}-\frac{2}{1+e},$$
		which upon simplification, yields
		$$r\leq \left(\frac{e-1}{A(1+e)-2B}\right)^{\frac{1}{n}}.$$
{\bf(ii)}	If $-1 \leq B < 0 < A \leq 1$, then
		$$a := \frac{1-ABr^{2n}}{1-B^2r^{2n}}\geq 1.$$
			Again by Lemma~\ref{main}, we see that $f \in \mathcal{S}^*_{SG,n}$ if
		$$\frac{(A-B)r^n}{1-B^2r^{2n}}\leq \frac{2e}{1+e}-\frac{1-ABr^{2n}}{1-B^2r^{2n}},$$
		which simplifies to
		$$r\leq \biggl(\frac{e-1}{A(1+e)-2Be}\biggl)^{1/n}.$$ The result follows with sharpness due to the function $f_{A,B}(z),$ given by
		\begin{equation*}
		f_{A,B}(z) =
		\left\{
		\begin{array}{ll}
		z(1+Bz^n)^{\frac{A-B}{nB}}; & B\neq0, \\
		z\exp\left(\frac{Az^n}{n}\right);    &  B=0.
		\end{array}	
		\right.
		\end{equation*}
\end{proof}
\begin{corollary}
  The sharp $\mathcal{S}^*_{SG}$-radius for $\mathcal{S}^*(\alpha)$ is $(e-1)/(1+3e-2\alpha(1+e)),\;0\leq\alpha<1.$ The bound is sharp for $k_{\alpha}(z)=z/(1-z)^{2(1-\alpha)}.$
\end{corollary}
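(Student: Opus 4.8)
The plan is to obtain this corollary as a direct specialization of Theorem~\ref{Sn_ab_1}. First I would record that the class of starlike functions of order $\alpha$ coincides with a Janowski class, namely $\mathcal{S}^*(\alpha)=\mathcal{S}^*_1[1-2\alpha,-1]$; this is immediate from the identity $\mathcal{P}_n(\alpha)=\mathcal{P}[1-2\alpha,-1]$ stated in part~$(ii)$ of Lemma~\ref{Plemma}. Accordingly I set $n=1$, $A=1-2\alpha$ and $B=-1$. Since $B=-1\le 0$, the governing case of Theorem~\ref{Sn_ab_1} is part~$(ii)$.

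Next I would substitute these values into the radius formula of part~$(ii)$. The only algebra is the simplification of the denominator $A(1+e)-2Be$: expanding gives $(1-2\alpha)(1+e)+2e=1+3e-2\alpha(1+e)$, so that the candidate radius is $(e-1)/(1+3e-2\alpha(1+e))$, exactly the asserted value.

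Then I would verify that the minimum in Theorem~\ref{Sn_ab_1} is realized by this fraction rather than by $1$, i.e.\ that $(e-1)/(1+3e-2\alpha(1+e))\le 1$ for every $\alpha\in[0,1)$. This is the single point requiring a short argument, and I expect it to be the only mild obstacle, the rest being pure substitution: the denominator $1+3e-2\alpha(1+e)$ is a strictly decreasing function of $\alpha$, and its value at the endpoint $\alpha=1$ equals $e-1$. Hence on $[0,1)$ the denominator strictly exceeds $e-1$, forcing the fraction to stay below $1$, so the minimum is indeed the fraction.

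Finally, I would confirm sharpness by specializing the extremal function $f_{A,B}$ from the proof of Theorem~\ref{Sn_ab_1}. With $n=1$, $A=1-2\alpha$ and $B=-1$, the exponent $(A-B)/(nB)$ equals $-2(1-\alpha)$, whence $f_{A,B}(z)=z(1-z)^{-2(1-\alpha)}=k_{\alpha}(z)$. This identifies the stated extremal function and establishes that the bound cannot be improved.
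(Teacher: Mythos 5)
Your proposal is correct and is exactly the argument the paper intends: the corollary is the specialization of Theorem~\ref{Sn_ab_1}$(ii)$ with $n=1$, $A=1-2\alpha$, $B=-1$, where $A(1+e)-2Be=1+3e-2\alpha(1+e)$ and the extremal function $f_{A,B}$ reduces to $k_\alpha$. Your additional check that the resulting fraction never exceeds $1$ on $[0,1)$ is a small but worthwhile point of care that the paper leaves implicit.
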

\begin{corollary}
  The sharp $\mathcal{S}^*_{SG}$-radius for $\mathcal{S}^*$ is $(e-1)/(1+3e).$ The bound is sharp for $k(z)=z/(1-z)^{2}.$
\end{corollary}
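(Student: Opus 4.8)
The plan is to read off this corollary as the specialisation of Theorem~\ref{Sn_ab_1} to $\mathcal{S}^*=\mathcal{S}^*_1[1,-1]$, i.e. the Janowski class with $A=1$, $B=-1$, $n=1$; equivalently, one may put $\alpha=0$ in the preceding corollary. Since $B=-1\le 0<A=1$, part~(ii) applies, and I would simply substitute these values into $\bigl((e-1)/(A(1+e)-2Be)\bigr)^{1/n}$. The denominator collapses, $A(1+e)-2Be=(1+e)+2e=1+3e$, producing the radius $(e-1)/(1+3e)$; as $(e-1)/(1+3e)<1$, the minimum with $1$ is inactive.

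For a self-contained check I would rerun the core estimate directly. For $f\in\mathcal{S}^*$ write $p(z)=zf'(z)/f(z)\in\mathcal{P}_1(0)$; Lemma~\ref{Plemma}(ii) with $\alpha=0$ confines $p$ on $|z|=r$ to the disk centred at $a=(1+r^2)/(1-r^2)$ of radius $\rho=2r/(1-r^2)$. Because $a\ge 1$, Lemma~\ref{main} turns containment in $\Delta_{SG}$ into the single requirement that the rightmost point $a+\rho$ not exceed $2e/(1+e)$. The one simplification worth recording is $a+\rho=(1+r)^2/\bigl((1-r)(1+r)\bigr)=(1+r)/(1-r)$, whereupon $(1+r)/(1-r)\le 2e/(1+e)$ rearranges to the advertised $r\le(e-1)/(1+3e)$.

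Sharpness is the only step needing attention. I would take the Koebe function $k(z)=z/(1-z)^2$, which is precisely the extremal $f_{A,B}$ of Theorem~\ref{Sn_ab_1} for $A=1$, $B=-1$, and note $zk'(z)/k(z)=(1+z)/(1-z)$. At the real point $z=r=(e-1)/(1+3e)$ this equals $2e/(1+e)$, and the finishing move is to verify that this value lies on $\partial\Delta_{SG}$: indeed $w=2e/(1+e)$ gives $w/(2-w)=e$, so $|\log(w/(2-w))|=1$. Since $(1+z)/(1-z)$ carries $|z|\le r$ onto a closed disk tangent to $\partial\Delta_{SG}$ at that point, no larger radius is admissible. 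The only real obstacle anywhere is the algebraic bookkeeping that reduces the quadratic-looking containment condition to a linear inequality in $r$; the underlying geometry is transparent once one observes that $2e/(1+e)$ is the right endpoint of $\Delta_{SG}\cap\mathbb{R}$.
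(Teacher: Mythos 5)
Your proposal is correct and follows essentially the same route as the paper: the corollary is exactly Theorem~\ref{Sn_ab_1}(ii) specialised to $A=1$, $B=-1$, $n=1$ (equivalently $\alpha=0$ in the preceding corollary), with sharpness coming from the extremal $f_{A,B}(z)=z(1-z)^{-2}$, the Koebe function. Your ``self-contained check'' merely unwinds the theorem's own proof (Lemma~\ref{Plemma} plus Lemma~\ref{main}, noting $a+\rho=(1+r)/(1-r)$ and that $2e/(1+e)$ is the right endpoint of $\Delta_{SG}\cap\mathbb{R}$) in this special case, so it is a correct but not genuinely different argument.
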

Before we proceed to our next result, we need to recall the following classes:\\
For $0\leq\alpha<1,$ the class $\mathcal{BS}^*(\alpha):=\{f\in\mathcal{A}:zf'(z)/f(z)\prec 1+z/(1-\alpha z^2)\},$ defined by Kargar et al.~\cite{kargar}. In~\cite{kanika}, Khatter et al. generalized
$\mathcal{S}^*_L:=\mathcal{S}^*(\sqrt{1+z})$ and $\mathcal{S}^*_e:=\mathcal{S}^*(e^z)$ to $\mathcal{S}^*_L(\alpha):=\mathcal{S}^*(\alpha+(1-\alpha)\sqrt{1+z})$ and $\mathcal{S}^*_{\alpha,e}:=\mathcal{S}^*(\alpha+(1-\alpha)e^z)$ respectively, for $\alpha\in[0,1)$.

\begin{theorem}
The radius estimates of Sigmoid starlikeness, for the classes $\mathcal{BS}^*(\alpha),\;\mathcal{S}^*_L(\alpha)$ and $\mathcal{S}^*_{\alpha,e}$ are given by
\begin{itemize}
 \item[$(i)$] $R_{\mathcal{S}^*_{SG}}(\mathcal{BS}^*(\alpha))=r_{\mathcal{BS}}(\alpha):=2(e-1)/((1+e)+\sqrt{(1+e)^2+4\alpha(e-1)^2})$, where $\alpha\in[0,1).$
 \item[$(ii)$] $R_{\mathcal{S}^*_{SG}}(\mathcal{S}^*_L(\alpha))=r_L(\alpha):=((e-1)(3+e-2\alpha(1+e)))/((1-\alpha)^2(1+e)^2),\;\text{where}\; \alpha\in[0,(3+e)/2(1+e)).$ In particular, $R_{\mathcal{S}^*_{SG}}(\mathcal{S}^*_L)=((e-1)(3+e))/(1+e)^2.$
 \item[$(iii)$] $R_{\mathcal{S}^*_{SG}}(\mathcal{S}^*_{\alpha,e})=r_e(\alpha):=\log{(2e-\alpha(1+e))/(1+e)(1-\alpha)},$ where $\alpha\in[0,(e(1+e)-2e)/((1+e)(e-1))).$ In particular, $R_{\mathcal{S}^*_{SG}}(\mathcal{S}^*_e)=\log(2e/(1+e)).$
 \end{itemize}
 All estimates are sharp.
\end{theorem}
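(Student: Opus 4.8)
The plan is to turn each part into a question about images of discs. If $f\in\mathcal{S}^*(\phi)$ then $zf'(z)/f(z)=\phi(\omega(z))$ for a Schwarz function $\omega$, so for $|z|\le r$ the quantity $zf'(z)/f(z)$ lies in $\phi(\overline{\mathbb{D}}_r)$, where $\overline{\mathbb{D}}_r=\{z:|z|\le r\}$. Hence $f$ is Sigmoid starlike on $|z|<r$ exactly when $\phi(\overline{\mathbb{D}}_r)\subseteq\Delta_{SG}$, and the sought radius is the largest such $r$. In all three cases $\phi$ has real coefficients with $\phi(0)=1$, so each image is symmetric about the real axis with real trace $[\phi(-r),\phi(r)]$; and $\Delta_{SG}$ is symmetric about the real axis and about the line $\RE w=1$, with real boundary points $2/(1+e)$ and $2e/(1+e)$ (the preimages of $\mp1$ under $w\mapsto\log(w/(2-w))$). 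The two tasks are therefore to bound $\phi(\overline{\mathbb{D}}_r)$ and to decide which part of $\partial\Delta_{SG}$ it meets first as $r$ grows.

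For $\mathcal{BS}^*(\alpha)$ I would use a bounding disc and Lemma~\ref{main}. Since $\phi(z)-1=z/(1-\alpha z^2)$ is analytic on $\overline{\mathbb{D}}_r$, the maximum modulus principle gives $|\phi(z)-1|\le r/(1-\alpha r^2)$, attained at $z=r$; thus $\phi(\overline{\mathbb{D}}_r)$ lies in the disc of centre $a=1$ and radius $r/(1-\alpha r^2)$. By Lemma~\ref{main}, with $a=1$ and $r_a=(e-1)/(e+1)$, this disc is contained in $\Delta_{SG}$ precisely when $r/(1-\alpha r^2)\le(e-1)/(e+1)$; solving the quadratic and rationalising gives $r_{\mathcal{BS}}(\alpha)$. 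Here the disc is centred exactly at $1$, so both real boundary points are reached simultaneously, which is why the estimate is symmetric.

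For $\mathcal{S}^*_L(\alpha)$ and $\mathcal{S}^*_{\alpha,e}$ the images are not discs centred on the axis of symmetry (for $\phi(z)=\alpha+(1-\alpha)e^z$ even the diameter disc fails to contain the image), so I would argue directly with $\Delta_{SG}$. Put $\psi(z)=\log(\phi(z)/(2-\phi(z)))$, which is analytic on $\overline{\mathbb{D}}_r$ because the values of $\phi$ stay in the right half-plane; then $\phi(\overline{\mathbb{D}}_r)\subseteq\Delta_{SG}$ is equivalent to $\max_{|z|\le r}|\psi(z)|\le1$. By the maximum modulus principle this maximum sits on $|z|=r$, and by the symmetry $|\psi(re^{i\theta})|=|\psi(re^{-i\theta})|$ it suffices to look at $\theta\in[0,\pi]$. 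The key point is that $\theta\mapsto|\psi(re^{i\theta})|$ is monotone, so the maximum occurs at an endpoint: at $z=-r$ for $\sqrt{1+z}$ (the left boundary is met first because $1-\sqrt{1-r}>\sqrt{1+r}-1$) and at $z=r$ for $e^z$ (the right boundary first because $e^r-1>1-e^{-r}$). Granting the monotonicity, the radius is found by setting the binding endpoint on $\partial\Delta_{SG}$: $\phi(-r)=2/(1+e)$ yields $r_L(\alpha)$ and $\phi(r)=2e/(1+e)$ yields $r_e(\alpha)$, with the stated $\alpha$-ranges being those for which this endpoint actually reaches $\partial\Delta_{SG}$ inside the unit disc (otherwise the class is already Sigmoid starlike and the radius is $1$).

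I expect this monotonicity to be the main obstacle: proving that the image boundary $\phi(re^{i\theta})$ recedes from $\partial\Delta_{SG}$ as $\theta$ leaves the binding real endpoint is a genuine one-variable inequality for $|\psi(re^{i\theta})|$ (equivalently, a comparison of $\phi(re^{i\theta})$ with the curve $\log(w/(2-w))=e^{it}$), not a soft argument. Everything else is bookkeeping. Sharpness in each part is delivered by the Ma--Minda extremal function with $\omega(z)=z$, i.e. the $f$ with $zf'(z)/f(z)=\phi(z)$: at $r$ equal to the claimed radius the value $\phi(\pm r)$ lands exactly on $2e/(1+e)$ or $2/(1+e)\in\partial\Delta_{SG}$, so no larger radius is admissible. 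Finally, setting $\alpha=0$ recovers $R_{\mathcal{S}^*_{SG}}(\mathcal{S}^*_L)=(e-1)(3+e)/(1+e)^2$ and $R_{\mathcal{S}^*_{SG}}(\mathcal{S}^*_e)=\log(2e/(1+e))$.
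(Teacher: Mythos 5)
Your treatment of part (i) coincides with the paper's argument: bounding disc centred at $1$, Lemma~\ref{main}, and the same extremal function. For parts (ii) and (iii), however, your proposal has a genuine gap, and it is exactly the step you yourself flag as the ``main obstacle'': the claim that $\theta\mapsto|\psi(re^{i\theta})|$ is monotone, i.e.\ that $\sup_{|z|=r}\left|\log\bigl(\phi(z)/(2-\phi(z))\bigr)\right|$ is attained at $z=-r$ for $\phi(z)=\alpha+(1-\alpha)\sqrt{1+z}$ and at $z=r$ for $\phi(z)=\alpha+(1-\alpha)e^z$. You never prove this, and without it your argument establishes only necessity (the extremal value $\phi(\mp r)$ leaves $\Delta_{SG}$ once $r$ exceeds the claimed radius), not sufficiency: nothing you write rules out the curve $\phi(re^{i\theta})$ exiting $\Delta_{SG}$ at a non-real point while both real endpoints are still inside. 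As it stands, you have proved an upper bound for the radius, not the radius.

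The reason you give for abandoning the disc method is also misplaced. The relevant disc is not the diameter disc but the disc centred at $\phi(0)=1$ of radius $\sup_{|z|=r}|\phi(z)-1|$, and this is precisely how the paper closes both cases. Two elementary estimates do all the work on $|z|=r$: first, $|1-\sqrt{1+z}|=|z|/|1+\sqrt{1+z}|\le r/(1+\sqrt{1-r})=1-\sqrt{1-r}$, since $|1+\sqrt{1+z}|\ge 1+\RE\sqrt{1+z}\ge 1+\sqrt{1-r}$; second, $|e^z-1|\le\sum_{k\ge1}r^k/k!=e^r-1$. Hence the image of $|z|\le r$ lies in $|w-1|\le(1-\alpha)(1-\sqrt{1-r})$, resp.\ $|w-1|\le(1-\alpha)(e^r-1)$, and Lemma~\ref{main} with $a=1$ reduces containment in $\Delta_{SG}$ to the scalar inequality $(1-\alpha)(1-\sqrt{1-r})\le(e-1)/(e+1)$, resp.\ $(1-\alpha)(e^r-1)\le(e-1)/(e+1)$. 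This disc bound is lossless: the supremum equals the distance from the centre $1$ to the binding real endpoint $\phi(-r)$, resp.\ $\phi(r)$, so the bounding disc touches $\partial\Delta_{SG}$ exactly when the extremal value does, and the same computation yields both sufficiency and sharpness --- it delivers, as a corollary, exactly what your unproven monotonicity was meant to deliver. One caution on your closing remark about the $\alpha$-ranges: for part (ii) the endpoint $\phi(-r)$ actually reaches $2/(1+e)$ inside the unit disc only when $\alpha<2/(1+e)$; for $2/(1+e)<\alpha<(3+e)/(2(1+e))$ the squaring that produces the formula $r_L(\alpha)$ introduces a spurious root smaller than $1$, while the disc bound shows $|\phi(z)-1|\le 1-\alpha\le(e-1)/(e+1)$ on all of $\mathbb{D}$, so the whole class is already Sigmoid starlike there. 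So the stated range in the theorem is not the one your heuristic describes, and this is worth flagging rather than absorbing silently.
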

\begin{proof}
\begin{itemize}
\item[(i)]  Let $f\in\mathcal{BS}^*(\alpha).$ Then $zf'(z)/f(z)\prec 1+z/(1-\alpha z^2)$ and thus
    \begin{equation*}
      \left|\dfrac{zf'(z)}{f(z)}-1\right|\leq\left|\frac{z}{1-\alpha z^2}\right|\leq \left|\frac{r}{1-\alpha r^2}\right|\quad \text{on }|z|=r.
    \end{equation*}
    Using Lemma~\ref{main}, it can be said that the above disk lies in $\Delta_{SG}$ if $r/(1-\alpha r^2)\leq (e-1)/(e+1).$ This further implies $r\leq r_{\mathcal{BS}}(\alpha).$ Sharpness holds for the function
    \begin{equation*}
      f_{\mathcal{BS}}(z)=\begin{cases}
        z\left(\frac{1+\sqrt{\alpha z}}{1-\sqrt{\alpha}z}\right)^{1/(2\sqrt{\alpha})},& \alpha\in(0,1)\\
        ze^z, & \alpha=0.
      \end{cases}
    \end{equation*}
    It can be verified with the following graph that $zf'_{\mathcal{BS}}(z)/f_{\mathcal{BS}}(z)$ touches the boundary of $\Delta_{SG}$ at the points $\pm2(e-1)/((1+e)+\sqrt{(1+e)^2+4\alpha(e-1)^2}).$ Note that the domain $\Omega_{BS}$ denotes the image of $\mathbb{D}$ mapped by the function $1+z/(1-\alpha z^2)$\\ \ \\
\begin{figure}[H]
\begin{tabular}{cl}

         \begin{tabular}{c}\label{fig1}
          \includegraphics[height=5cm, width=5cm]{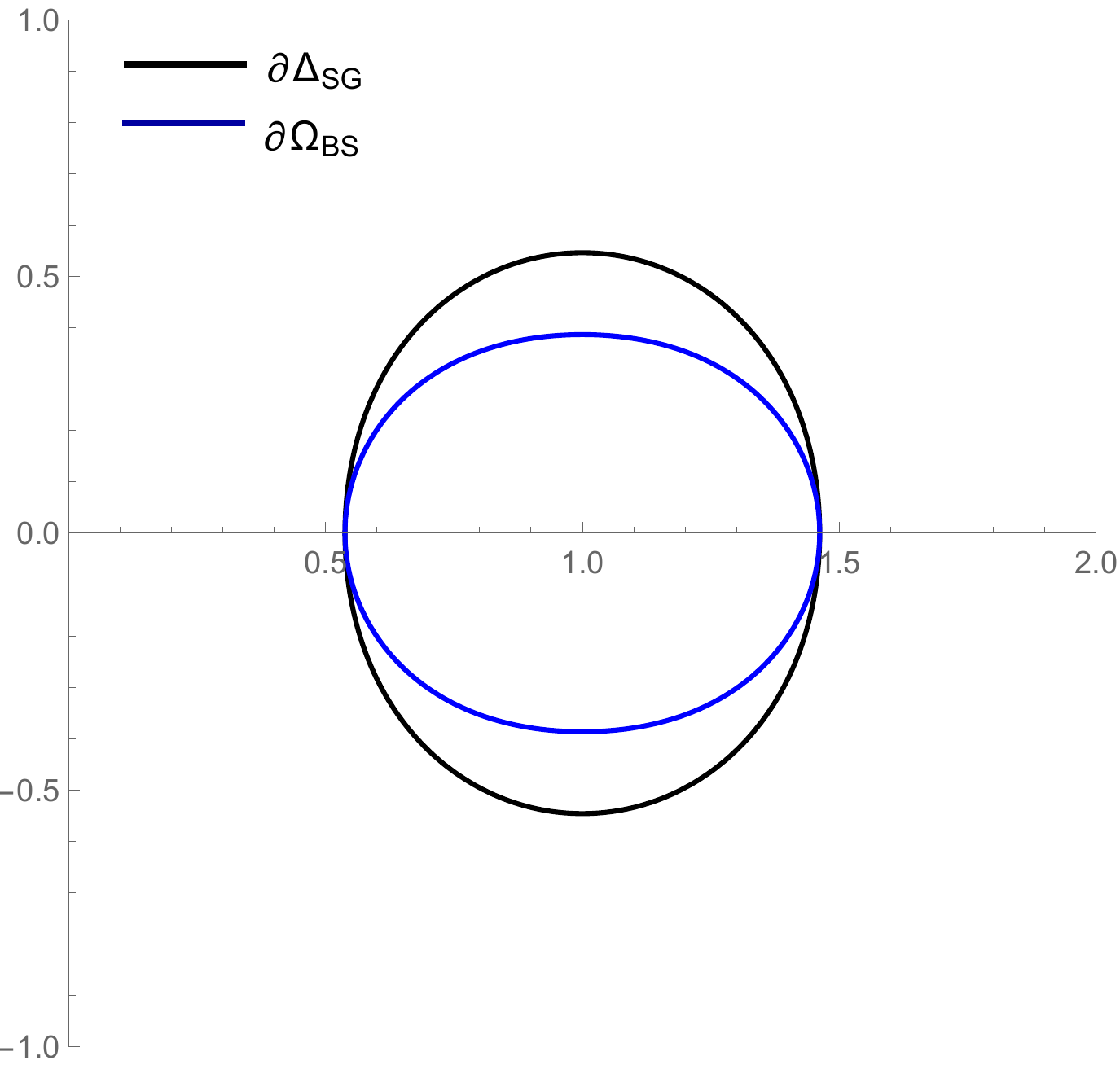}\\
          Sharpness for $\alpha=0.5$

           \end{tabular}

           & \begin{tabular}{l}
             \parbox{0.4\linewidth}{%  change the parbox width as appropiate
            \includegraphics[height=5cm, width=5cm]{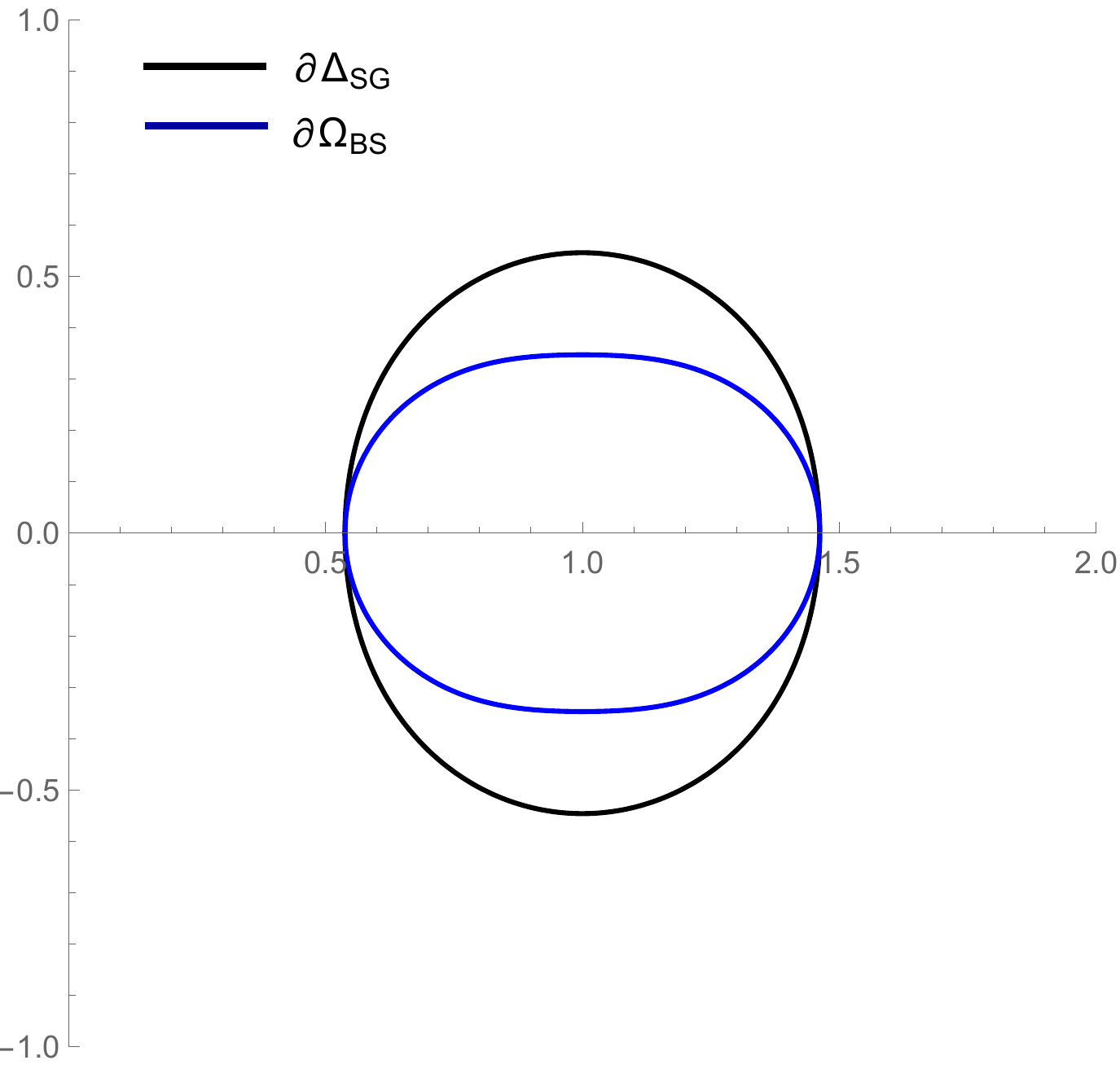}\\
               Sharpness for $\alpha=0.9$ }
         \end{tabular}  \\
\end{tabular}
\caption{}
\end{figure}
\item[(ii)]  Let $f\in\mathcal{S}^*_L(\alpha),$  then $zf'(z)/f(z)\prec \alpha+(1-\alpha)\sqrt{1+z}$ and therefore on $|z|=r$
        \begin{equation*}
          \left|\frac{zf'(z)}{f(z)}-1\right|\leq |(1-\alpha)(1-\sqrt{1+z})|\leq (1-\alpha)(1-\sqrt{1-r}).
        \end{equation*}
        By Lemma~\ref{main}, it is clear that for the above disk to lie in $\Delta_{SG},$ we need $(1-\alpha)(1-\sqrt{1-r})\leq (e-1)/(e+1),$ which upon simplification yields $r\leq ((e-1)(3+e-2\alpha(1+e)))/((1-\alpha)^2(1+e)^2).$ Note that for the function
        \begin{equation*}
          f_L(z)=z+(1-\alpha)z^2+\frac{1}{16}(1-\alpha)(1-2\alpha)z^3+\cdots,
        \end{equation*}
        the result is sharp. The sharpness of this result can be verified by the following graph, where $\Omega_L$ denotes the image of $\mathbb{D}$ mapped by $\alpha+(1-\alpha)\sqrt{1+z}$.\\
        \begin{figure}[H]
   \begin{tabular}{cl}

         \begin{tabular}{c}\label{fig1}
          \includegraphics[height=5cm, width=5cm]{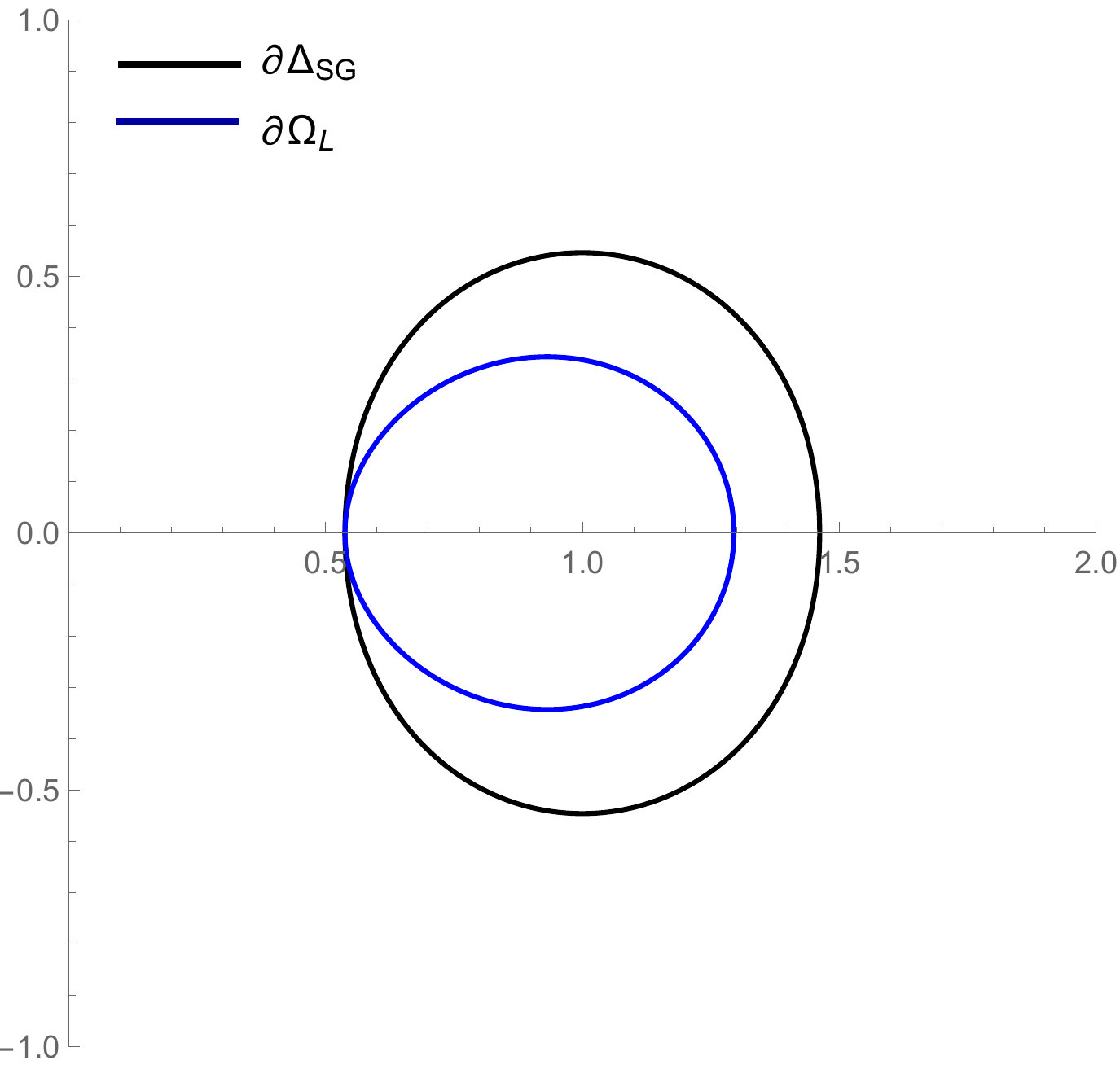}\\
          Sharpness for $\alpha=0.1$

           \end{tabular}

           & \begin{tabular}{l}
             \parbox{0.4\linewidth}{%  change the parbox width as appropiate
            \includegraphics[height=5cm, width=5cm]{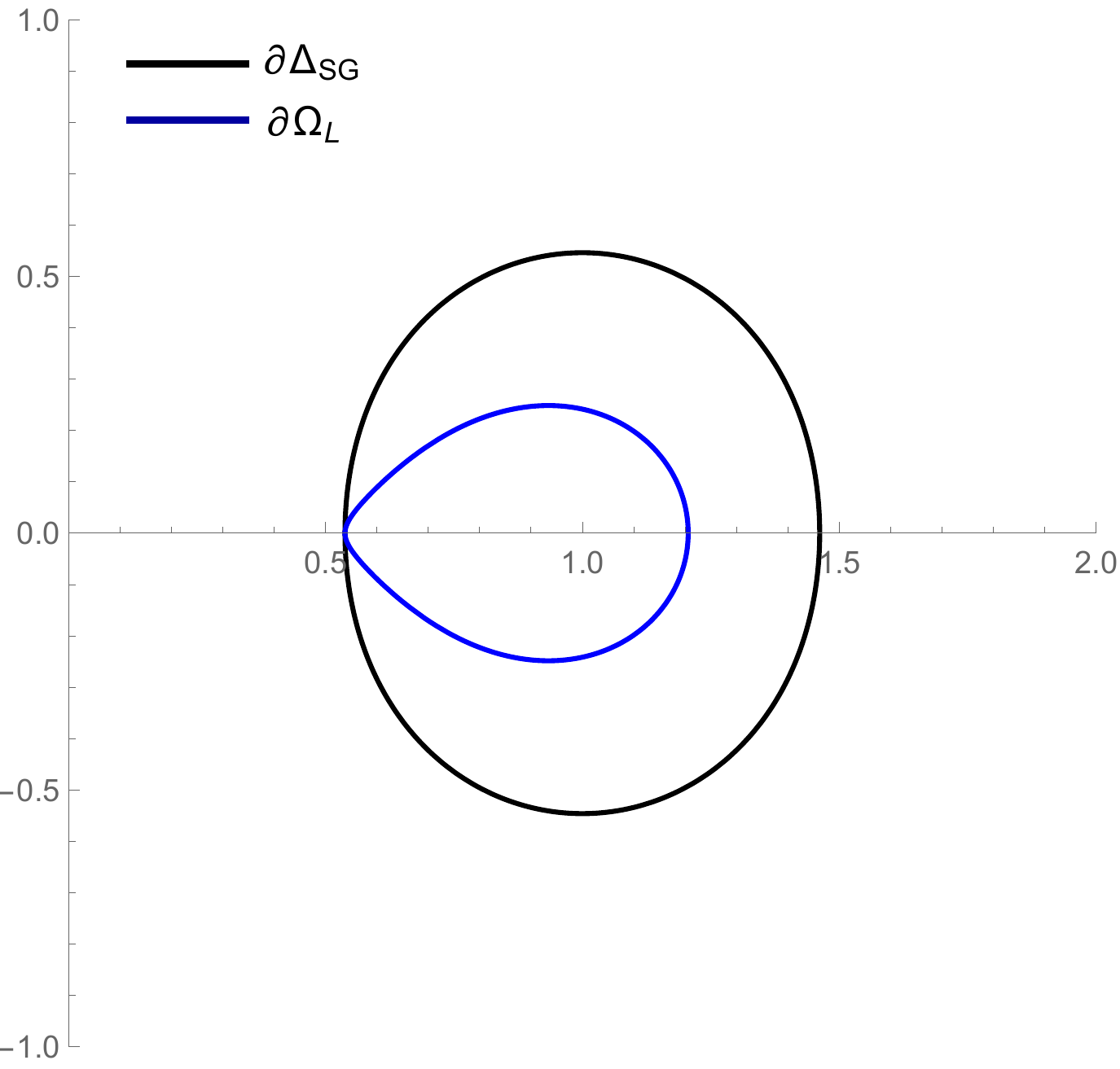}\\
               Sharpness for $\alpha=0.5$

    }
         \end{tabular}  \\
\end{tabular}
\caption{}
\end{figure}
\item[(iii)]    Let $f\in\mathcal{S}^*_{\alpha,e},$ then $zf'(z)/f(z)\prec \alpha+(1-\alpha)e^z.$ So on $|z|=r$
        \begin{equation*}
          \left|\frac{zf'(z)}{f(z)}-1\right|\leq (1-\alpha)|e^z-1|\leq (1-\alpha)(e^r-1).
        \end{equation*}
        By Lemma~\ref{main}, $f\in\mathcal{S}^*_{SG}$ if $(1-\alpha)(e^r-1)\leq (e-1)/(e+1),$ which is equivalent to $r\leq r_e(\alpha).$ The result is sharp for the function
        \begin{equation*}
          f_e(z)=z+(1-\alpha)z^2+\frac{1}{4}(1-\alpha)(3-2\alpha)z^3+\cdots
        \end{equation*}
        and is validated by the following graph. The image of $\mathbb{D}$ mapped by $\alpha+(1-\alpha)e^z$ is denoted by $\Omega_e$.\\
        \begin{figure}[H]
     \begin{tabular}{cl}

         \begin{tabular}{c}\label{fig1}
          \includegraphics[height=5cm, width=5cm]{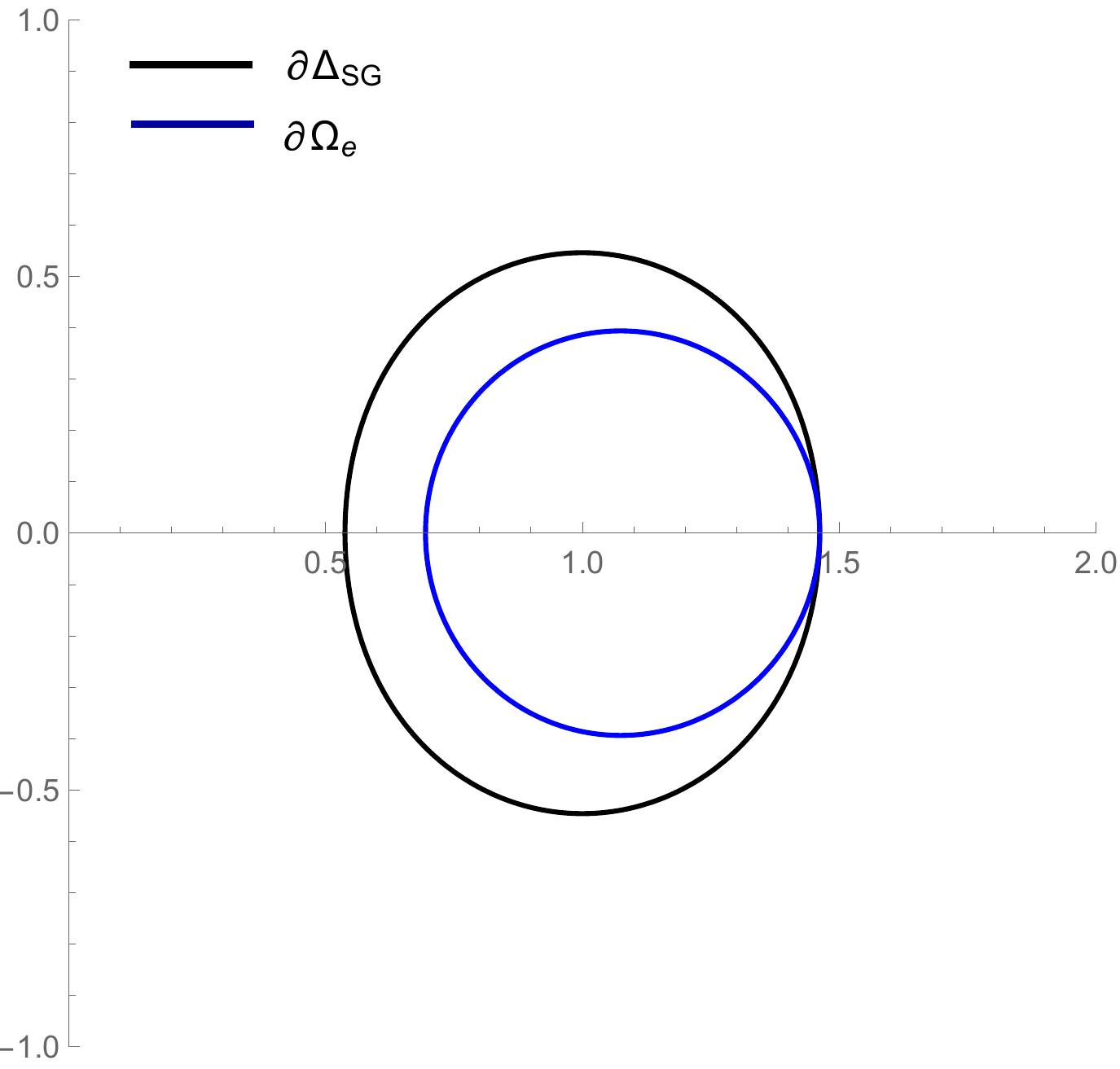}\\
          Sharpness for $\alpha=0.1$

           \end{tabular}

           & \begin{tabular}{l}
             \parbox{0.4\linewidth}{%  change the parbox width as appropiate
            \includegraphics[height=5cm, width=5cm]{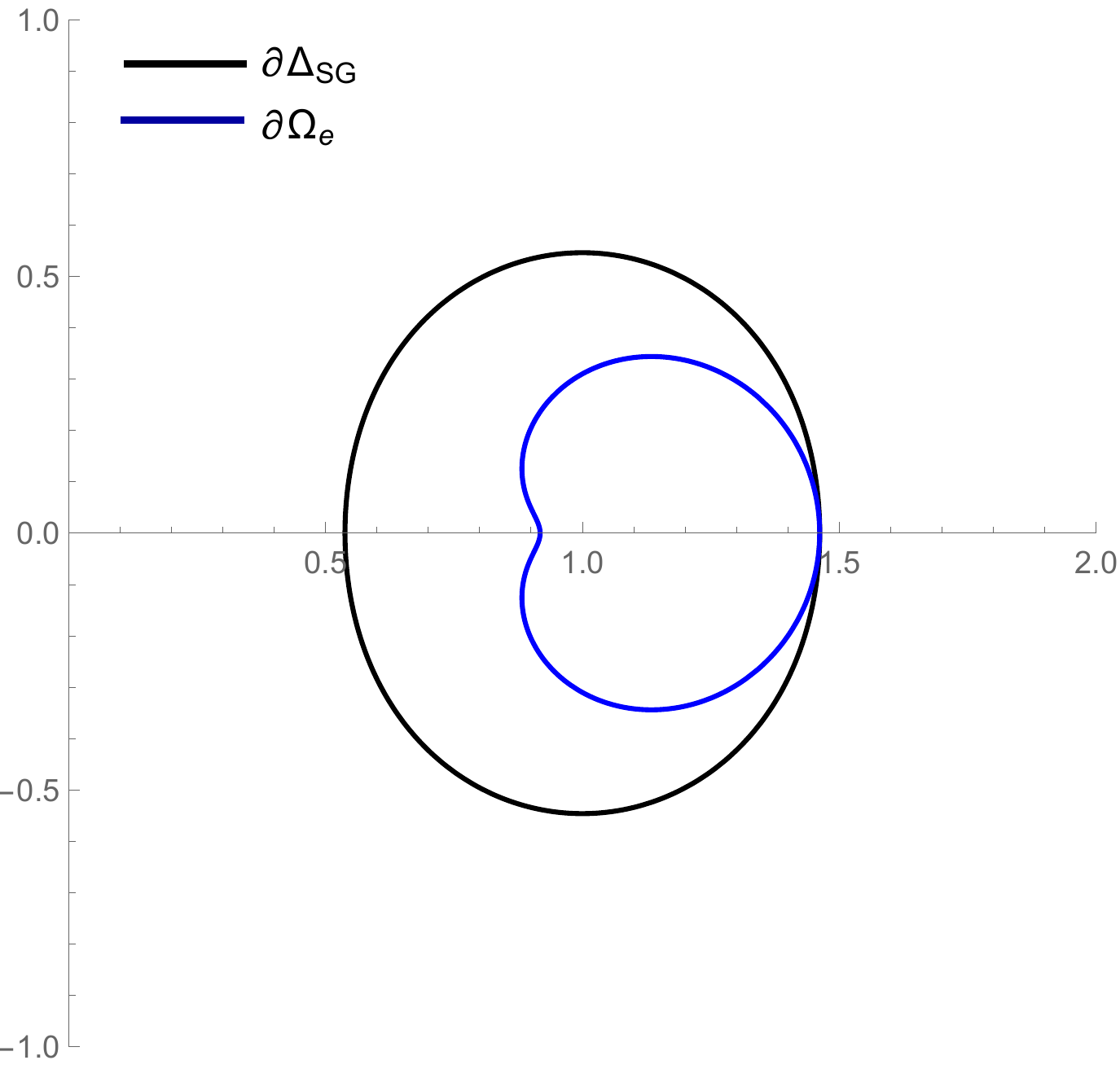}\\
               Sharpness for $\alpha=0.9$
               }
         \end{tabular} \\

\end{tabular}
\caption{}
\end{figure}
\end{itemize}
\end{proof}
Before we proceed further, let us recall the following classes: In~\cite{mendiratta2}, Mendiratta et al. considered the class of starlike functions associated with right lemniscate of Bernoulli, denoted by $\mathcal{S}^*_{RL}=\mathcal{S}^*(\phi),$ where $\phi$ is given by
$$\phi(z)=\sqrt{2}-(\sqrt{2}-1)\sqrt{\frac{1-z}{1+2(\sqrt{2}-1)z}}.$$
The class of cardioid starlike functions, denoted by $\mathcal{S}^*_C:=\mathcal{S}^*(1+4z/3+2z^2/3),$ defined by Sharma et al.~\cite{cardiod}. For $k=\sqrt{2}+1,$ Kumar and Ravichandran~\cite{rational} introduced $\mathcal{S}^*_{R}$ by taking $\phi$ as $1+z(k+z)/k(k-z).$
\begin{theorem}
  The sharp $\mathcal{S}^*_{SG}$-radius for the classes $\mathcal{S}^*_{RL},\;\mathcal{S}^*_C$ and $\mathcal{S}^*_R$ is given by:
  \begin{itemize}
    \item [$(i)$] $R_{\mathcal{S}^*_{SG}}(\mathcal{S}^*_{RL})=:r_{RL}=\left(4 \sqrt{2}-7 e-5\right) (e-1)/(32 \sqrt{2}-7 e^2+6 e \left(4 \sqrt{2}-5\right)-47)\approx 0.738309.$
    \item [$(ii)$]$R_{\mathcal{S}^*_{SG}}(\mathcal{S}^*_C)=:r_C=-1+\sqrt{(-1+5e)/(2+2e)}\approx 0.301221.$
    \item [$(iii)$]$R_{\mathcal{S}^*_{SG}}(\mathcal{S}^*_{R})=:r_R=(\sqrt{\left(2 \sqrt{2}+3\right) \left(2 e^2-1\right)}-\left(\sqrt{2}+1\right) e)/(1+e)\approx 0.645131.$
  \end{itemize}
\end{theorem}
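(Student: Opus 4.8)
The plan is to treat all three classes uniformly through Lemma~\ref{main} with centre $a=1$. Each class has the form $\mathcal{S}^*(\phi)$ with $\phi(0)=1$, so the subordination $zf'(z)/f(z)\prec\phi(z)$ forces $zf'(z)/f(z)\in\phi(r\overline{\mathbb{D}})$ whenever $|z|\le r$. Taking $a=1$ in Lemma~\ref{main} gives $r_a=(e-1)/(e+1)$, so the disk $\{w:|w-1|<(e-1)/(e+1)\}$, which is the largest disk about $1$ inscribed in $\Delta_{SG}$ and is tangent to $\partial\Delta_{SG}$ exactly at the real points $2/(1+e)$ and $2e/(1+e)$, lies in $\Delta_{SG}$. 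Hence $f\in\mathcal{S}^*_{SG}$ as soon as
$$M_\phi(r):=\max_{|z|=r}|\phi(z)-1|\le\frac{e-1}{e+1},$$
and the $\mathcal{S}^*_{SG}$-radius is the largest $r$ for which this inequality holds.

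The second step is to evaluate $M_\phi(r)$ and to locate, on $|z|=r$, where the maximum is attained. For the cardioid class $\phi(z)-1=\tfrac{2}{3}z(2+z)$, so $|\phi(z)-1|=\tfrac{2r}{3}|2+z|$ is largest at $z=r$ and $M_\phi(r)=\tfrac{2}{3}(2r+r^2)$. For $\mathcal{S}^*_R$ one has $\phi(z)-1=z(k+z)/\big(k(k-z)\big)$ with $k=\sqrt2+1$, so that $|\phi(z)-1|=r|k+z|/\big(k|k-z|\big)$; since $|k+z|^2/|k-z|^2$ increases with $\RE z$, the maximum is again at $z=r$ and $M_\phi(r)=r(k+r)/\big(k(k-r)\big)$. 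In both cases solving $M_\phi(r)=(e-1)/(e+1)$, a quadratic in $r$, returns precisely $r_C$ and $r_R$. For $\mathcal{S}^*_{RL}$ I write $\phi(z)-1=(\sqrt2-1)\big(1-\sqrt{h(z)}\,\big)$ with $h(z)=(1-z)/\big(1+2(\sqrt2-1)z\big)$; here the extremum sits at $z=-r$, where $h(-r)>1$ is real, giving $M_\phi(r)=(\sqrt2-1)\big(\sqrt{h(-r)}-1\big)$, and solving $M_\phi(r)=(e-1)/(e+1)$ yields $r_{RL}$.

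Sharpness in every case comes from the Ma--Minda extremal function determined by $zf'(z)/f(z)=\phi(z)$. At the critical radius the defining equality $M_\phi(r)=(e-1)/(e+1)$ pushes the extreme value onto $\partial\Delta_{SG}$: for $\mathcal{S}^*_C$ and $\mathcal{S}^*_R$ the point $z=r$ yields $\phi(r)=2e/(1+e)$, while for $\mathcal{S}^*_{RL}$ the point $z=-r$ yields $\phi(-r)=2/(1+e)$, which are exactly the two real tangency points of the inscribed disk with $\partial\Delta_{SG}$. Thus $\phi(r\overline{\mathbb{D}})$ meets $\partial\Delta_{SG}$ at the critical radius, so no larger radius is admissible and each estimate is sharp.

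The main obstacle is locating the maximum for $\mathcal{S}^*_{RL}$: unlike the rational cases, where the modulus factorises and the extremum at $z=r$ is immediate, the square root in $\phi$ makes it non-obvious that $M_\phi(r)$ is attained at $z=-r$. I would argue this by noting that the M\"obius map $h$ carries $|z|=r$ onto a circle lying in the right half-plane whose point of largest modulus is $h(-r)$, its rightmost real intersection with the positive axis, and then checking that passing to $\sqrt{\,\cdot\,}$ preserves this extremal position for the distance $|1-\sqrt{h(z)}|$. Verifying this by maximising $|1-\sqrt{h(z)}|^2=1-2\RE\sqrt{h(z)}+|h(z)|$ directly over $|z|=r$ is the one genuinely delicate computation, and it is precisely what upgrades the inscribed-disk bound from sufficient to sharp.
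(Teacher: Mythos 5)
Your proposal is correct and takes essentially the same route as the paper: both arguments reduce each class, via Lemma~\ref{main} with center $a=1$, to checking $\max_{|z|=r}|\phi(z)-1|\le (e-1)/(e+1)$ (with the maximum at $z=r$ for $\mathcal{S}^*_C$, $\mathcal{S}^*_R$ and at $z=-r$ for $\mathcal{S}^*_{RL}$), solve the resulting equation for $r$, and invoke the function with $zf'(z)/f(z)=\phi(z)$ for sharpness. The only difference is one of rigor, in your favor: the paper asserts the $\mathcal{S}^*_{RL}$ bound without justifying that the maximum occurs at $z=-r$ and validates sharpness graphically, whereas you flag that maximization as the delicate step and replace the graphs with the analytic observation that at the critical radius $\phi(\pm r)$ lands exactly on the real boundary points $2e/(1+e)$, respectively $2/(1+e)$, of $\Delta_{SG}$.
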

\begin{proof}
\begin{itemize}
\item[(i)] Let $f\in\mathcal{S}^*_{RL}.$ Then
$$\frac{zf'(z)}{f(z)}\prec \sqrt{2}-(\sqrt{2}-1)\sqrt{\frac{1-z}{1+2(\sqrt{2}-1)z}}.$$
Thus on $|z|=r,$ we have
$$\left|\frac{zf'(z)}{f(z)}-1\right|\leq 1-\left(\sqrt{2}-(\sqrt{2}-1)\sqrt{\frac{1+r}{1-2(\sqrt{2}-1)r}}\right).$$
By Lemma~\ref{main}, $f$ is in $\mathcal{S}^*_{SG}$ if
$$1-\left(\sqrt{2}-(\sqrt{2}-1)\sqrt{\frac{1+r}{1-2(\sqrt{2}-1)r}}\right)\leq\frac{e-1}{e+1},$$
which is equivalent to $r\leq r_{RL}.$ This result is sharp for the following function
{\small$$f_{RL}(z)=z\left(\frac{\sqrt{1-z}+\sqrt{1+2(\sqrt{2}-1)z}}{2}\right)^{2\sqrt{2}-2}\exp{\left(\sqrt{2(\sqrt{2}-1)}\tan^{-1}\Psi(z)\right)},$$}
where
$$\Psi(z)=\frac{\sqrt{2(\sqrt{2}-1)}\left(\sqrt{2(\sqrt{2}-1)z+1}-\sqrt{1-z}\right)}{2(\sqrt{2}-1)\sqrt{1-z}+\sqrt{2(\sqrt{2}-1)z+1}}.$$
The sharpness of this bound can be verified from Figure~\ref{threefig}(i).
\item[(ii)] Let $f\in\mathcal{S}^*_C,$ then we have $zf'(z)/f(z)\prec 1+4z/3+2z^2/3.$ Therefore on $|z|=r,$ we get
  \begin{equation*}
    \left|\frac{zf'(z)}{f(z)}-1\right|\leq \frac{2(r^2+2r)}{3},
  \end{equation*}
  which if not exceeds $(e-1)/(e+1),$ implies that $f$ lies in $\mathcal{S}^*_{SG},$ by Lemma~\ref{main}. Solving this, we get $r\leq r_C.$ In order to verify the sharpness of this result, we consider the following function.
  \begin{equation*}
    f_C(z)=z\exp{\left(\frac{4z}{3}+\frac{z^2}{3}\right)}.
  \end{equation*}
  Clearly $f_C\in \mathcal{S}^*_C$ and moreover $zf'_C(z)/f_C(z)$ touches the boundary of $\Delta_{SG}$ at the point $z_0=-1+\sqrt{(-1+5e/(2+2e)},$ as shown in Figure~\ref{threefig}(ii).
\item[(iii)]  Let $f\in\mathcal{S}^*_R,$ then
  \begin{equation*}
    \frac{zf'(z)}{f(z)}\prec 1+\frac{z(k+z)}{k(k-z)},
  \end{equation*}
  where $k=\sqrt{2}+1.$ Thus on $|z|=r,$
  \begin{equation*}
    \left|\frac{zf'(z)}{f(z)}-1\right|\leq \frac{r(k+r)}{k(k-r)}.
  \end{equation*}
  In view of Lemma~\ref{main}, $f\in\mathcal{S}^*_{SG}$ if $r(k+r)/k(k-r)\leq (e-1)/(e+1).$ Solving this inequality, we obtain $r\leq r_R$. The equality of the radius estimate holds for the function
  $$f_R(z)=\dfrac{k^2 z}{(k-z)^2}e^{-z/k},\quad k=\sqrt{2}+1.$$
  Figure~\ref{threefig}(iii) verifies the sharpness of the result. Note that $\Omega_{RL},\;\Omega_{c}$ and $\Omega_R$ denote the image of $\mathbb{D}$ mapped by $zf'(z)/f(z)$ for $f_{RL},\;f_C$ and $f_R$ repectively.\\
\begin{figure}[!htb]
\minipage{0.32\textwidth}
  \includegraphics[width=\linewidth]{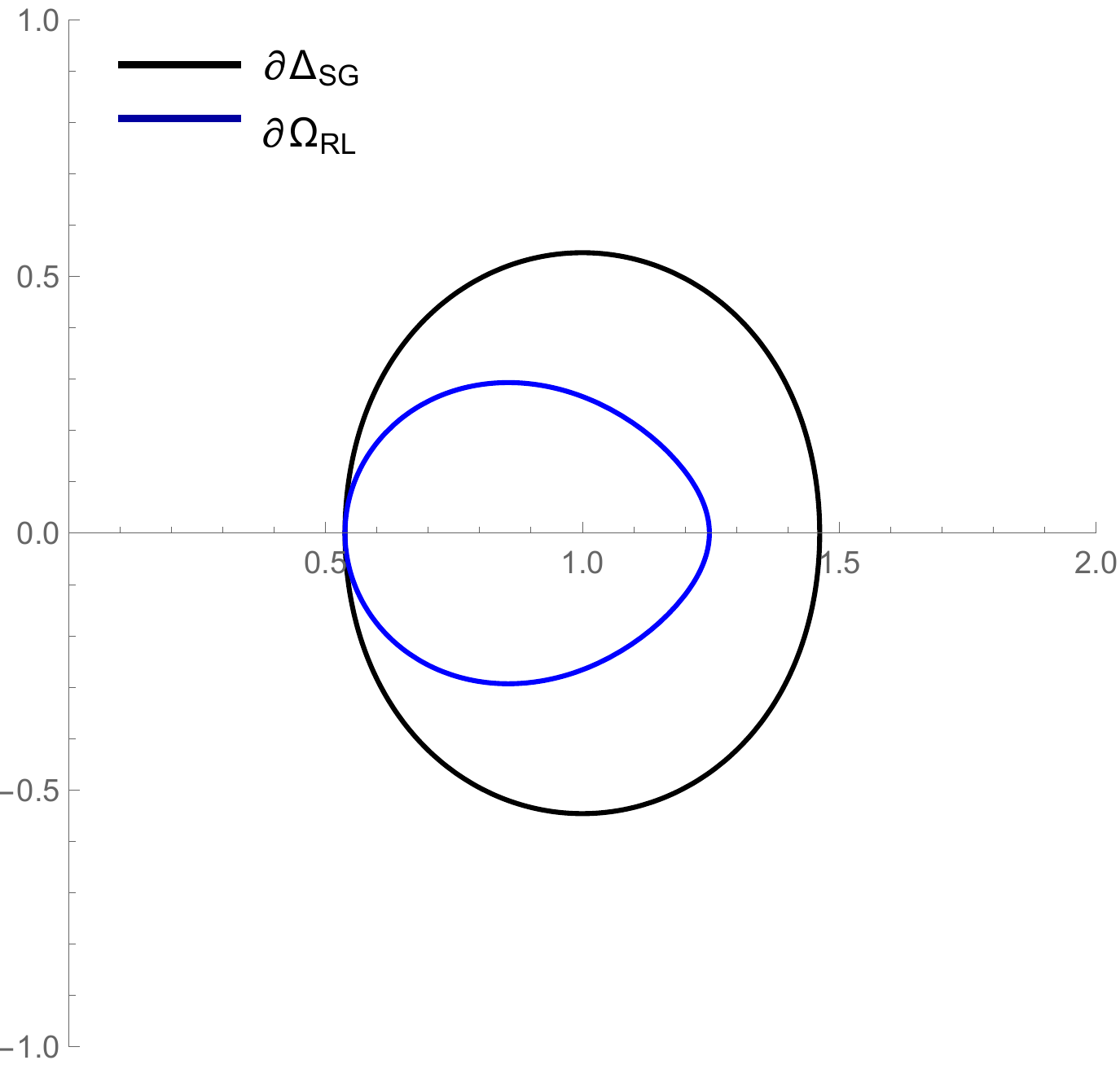}
  \captionsetup{labelformat=empty}
  %\caption*{{\scriptsize (i) Sharpness of $R_{\mathcal{S}^*_{SG}}(\mathcal{S}^*_{RL})$}}
\endminipage\hfill
\minipage{0.32\textwidth}
  \includegraphics[width=\linewidth]{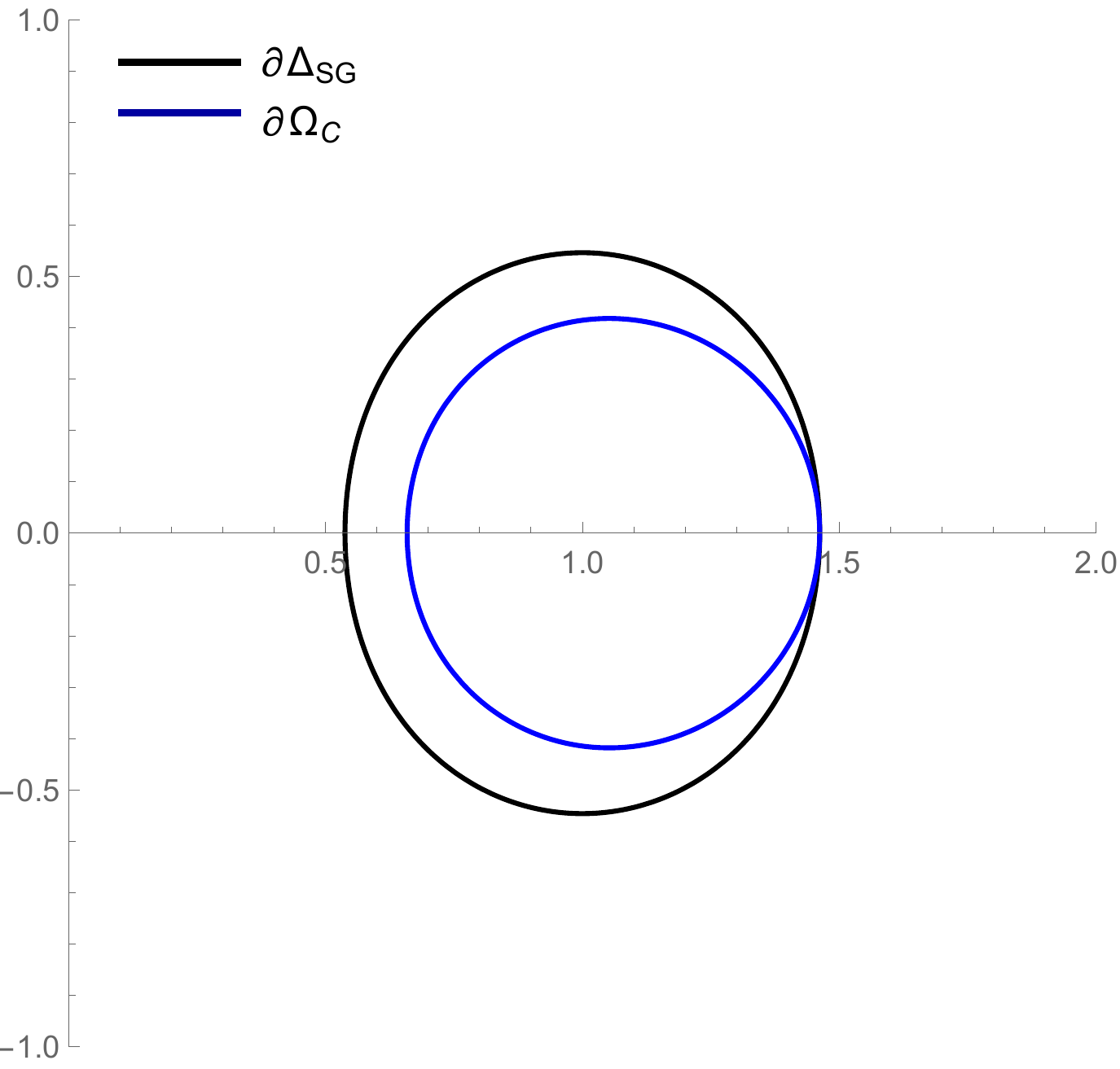}
  \captionsetup{labelformat=empty}
 % \caption*{{\scriptsize (ii) Sharpness of $R_{\mathcal{S}^*_{SG}}(\mathcal{S}^*_{C})$}}
\endminipage\hfill
\minipage{0.32\textwidth}%
  \includegraphics[width=\linewidth]{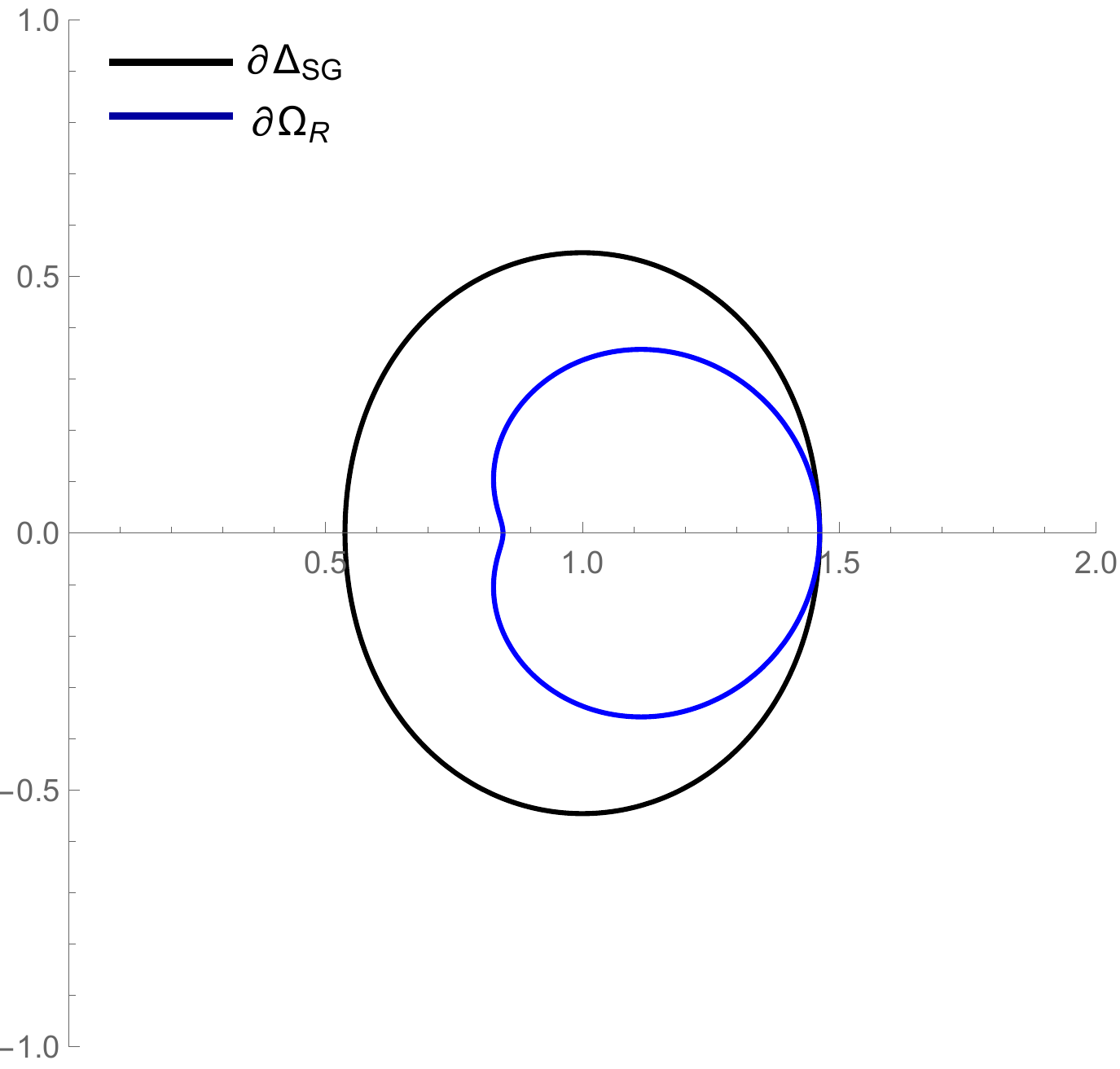}
  \captionsetup{labelformat=empty}
 % \caption*{{\scriptsize (iii) Sharpness of $R_{\mathcal{S}^*_{SG}}(\mathcal{S}^*_{R})$}}
\endminipage
\caption{}\label{threefig}
\end{figure}
\end{itemize}
\end{proof}
Let us recall the following classes in order to obtain our next result. By taking $\phi(z)=z+\sqrt{1+z^2}$, Sharma et al.~\cite{crescent} introduced $\mathcal{S}^*_{\leftmoon}.$ Similarly, Kumar and Gangania~\cite{kamal} intoduced $\mathcal{S}^*_{\wp}$ by taking $\phi$ as $1+ze^z,$ the cardioid function.
\begin{theorem}
 The sharp $\mathcal{S}^*_{SG}-$radii for the classes $\mathcal{S}^*_{\leftmoon}$ and $\mathcal{S}^*_{\wp}$  is given by
\begin{itemize}
 \item[$(i)$] $
   R_{\mathcal{S}^*_{SG}}(\mathcal{S}^*_{\leftmoon})=\frac{-1-2 e+3 e^2}{4 e+4 e^2}\approx 0.389089.$
 \item[$(ii)$] $ R_{\mathcal{S}^*_{SG}}(\mathcal{S}^*_{\wp})=r_{\wp}\approx 0.331672,$
 where $r_{\wp}$ is the smallest positive root of the equation $(e+1)re^r=e-1.$
\end{itemize}
\end{theorem}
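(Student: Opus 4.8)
The plan is to follow the template already used for the crescent- and cardioid-type classes in the preceding theorems. For $f$ in either class the defining subordination $zf'(z)/f(z)\prec\phi(z)$ forces $zf'(z)/f(z)$ to map $|z|<r$ into $\phi(\{|z|<r\})$, and since $\phi(0)=1$ in both cases the natural centre is $w=1$. Invoking Lemma~\ref{main} with $a=1$ (which lies in the admissible range $2/(1+e)<1<2e/(1+e)$) gives an inscribed disk of radius $r_a=(e-1)/(e+1)$, so the inclusion $\phi(\{|z|<r\})\subset\Delta_{SG}$, and hence membership in $\mathcal{S}^*_{SG}$ on $|z|<r$, is guaranteed as soon as $\max_{|z|=r}|\phi(z)-1|\le (e-1)/(e+1)$. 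The whole problem thus reduces to maximising $|\phi(z)-1|$ over each circle and solving the resulting scalar inequality.

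For part $(i)$, with $\phi(z)=z+\sqrt{1+z^2}$, I would first establish that $\max_{|z|=r}|z+\sqrt{1+z^2}-1|$ is attained at the real point $z=r$; granting this, the governing condition is $r+\sqrt{1+r^2}-1\le(e-1)/(e+1)$. Writing $c=(e-1)/(e+1)$, isolating the radical as $\sqrt{1+r^2}=1+c-r$ and squaring collapses the quadratic terms and leaves a linear equation, whose root is $r=c(c+2)/\bigl(2(1+c)\bigr)$; substituting $c=(e-1)/(e+1)$ and simplifying yields exactly $(3e^2-2e-1)/(4e^2+4e)$, the stated value. For part $(ii)$, where $\phi(z)-1=ze^z$, the modulus on $z=re^{i\theta}$ is $r\,e^{r\cos\theta}$, transparently maximised at $\theta=0$, so the condition becomes $r\,e^r\le(e-1)/(e+1)$, i.e. $(e+1)re^r\le e-1$. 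As $r\mapsto re^r$ is strictly increasing on $(0,\infty)$, the largest admissible $r$ is the (unique, hence smallest) positive root $r_\wp$ of $(e+1)re^r=e-1$.

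For sharpness in both parts I would exhibit the Ma--Minda extremal function $f_0(z)=z\exp\!\left(\int_0^z\frac{\phi(t)-1}{t}\,dt\right)$, which satisfies $zf_0'(z)/f_0(z)=\phi(z)$ identically. Evaluating at $z$ equal to the computed radius gives $\phi(r)=1+(e-1)/(e+1)=2e/(e+1)$, and this is precisely the right-hand real boundary point of $\Delta_{SG}$, since $w=2e/(e+1)$ gives $w/(2-w)=e$ and hence $\log\bigl(w/(2-w)\bigr)=1$. Thus $zf_0'(z)/f_0(z)$ touches $\partial\Delta_{SG}$ at that radius, showing the estimate cannot be enlarged.

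The main obstacle I anticipate is the extremal-point analysis for the crescent map in part $(i)$: unlike the cardioid case, where $|ze^z|=r\,e^{r\cos\theta}$ factors cleanly, one must verify that $|z+\sqrt{1+z^2}-1|$ genuinely peaks at $z=r$ and not at some off-axis point, which requires keeping track of the square-root branch on $|z|=r$ and differentiating the squared modulus in $\theta$ (a comparison with the competing point $z=-r$ already shows $z=r$ dominates there, since $r+\sqrt{1+r^2}-1 > 1+r-\sqrt{1+r^2}$). Once this monotonicity is secured, the algebra for $(i)$ and the root-counting for $(ii)$ are routine.
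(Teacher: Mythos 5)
Your proposal follows the paper's proof essentially verbatim: both reduce the problem via Lemma~\ref{main} (with centre $a=1$ and inscribed radius $(e-1)/(e+1)$) to the scalar inequalities $r+\sqrt{1+r^2}-1\le(e-1)/(e+1)$ and $re^r\le(e-1)/(e+1)$, and both certify sharpness with the Ma--Minda extremal functions $f_{\leftmoon}(z)=z\exp\left(\int_0^z\frac{t+\sqrt{1+t^2}-1}{t}\,dt\right)$ and $f_{\wp}(z)=ze^{e^z-1}$. The only difference is presentational: you make the algebra explicit and flag (with a partial argument) that $|z+\sqrt{1+z^2}-1|$ attains its maximum on $|z|=r$ at $z=r$, a fact the paper simply asserts without proof.
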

\begin{proof}
\begin{itemize}
\item[$(i)$] Let $f\in\mathcal{S}^*_{\leftmoon},$ then $zf'(z)/f(z)\prec z+\sqrt{1+z^2}.$ Therefore on $|z|=r,$
\begin{equation*}
  \left|\frac{zf'(z)}{f(z)}-1\right|=|z+\sqrt{1+z^2}-1|\leq r+\sqrt{1+r^2}-1.
\end{equation*}
Now by using Lemma~\ref{main}, the above disk lies inside the domain $\Delta_{SG}$ if $r+\sqrt{1+r^2}-1\leq(e-1)/(e+1).$ Solving this equation, we obtain the desired bound of $r.$ The result is sharp for the function
\begin{equation*}
  f_{\leftmoon}(z)=z\exp{\left(\int_0^z\dfrac{t+\sqrt{1+t^2}-1}{t}dt\right)}=z+z^2+\dfrac{3z^3}{4}+\dfrac{5z^4}{12}+\dfrac{z^5}{6}+\cdots.
\end{equation*}
\item[$(ii)$]Let $f\in\mathcal{S}^*_{\wp},$ then it is clear that $zf'(z)/f(z)\prec 1+ze^z.$ So we have
$$\left|\dfrac{zf'(z)}{f(z)}-1\right|=|ze^z|\leq re^r\quad\text{on}\;|z|=r.$$
By using Lemma~\ref{main}, we can say that $f\in\mathcal{S}^*_{SG}$ if $re^r\leq (e-1)/(e+1),$ which is equivalent to $r\leq r_{\wp}.$ The sharpness of the result can be verified by the function $f_{\wp}(z)=ze^{e^z-1}.$ The following graph depicts the sharpness of both the estimates. Note that the image of $\mathbb{D}$ mapped by $z+\sqrt{1+z^2}$ and $1+ze^z$ are respectively denoted by $\Omega_{\leftmoon}$ and $\Omega_{\wp}.$
\end{itemize}
\begin{figure}[H]
 \begin{tabular}{cl}

         \begin{tabular}{c}\label{fig1}
          \includegraphics[height=5cm, width=5cm]{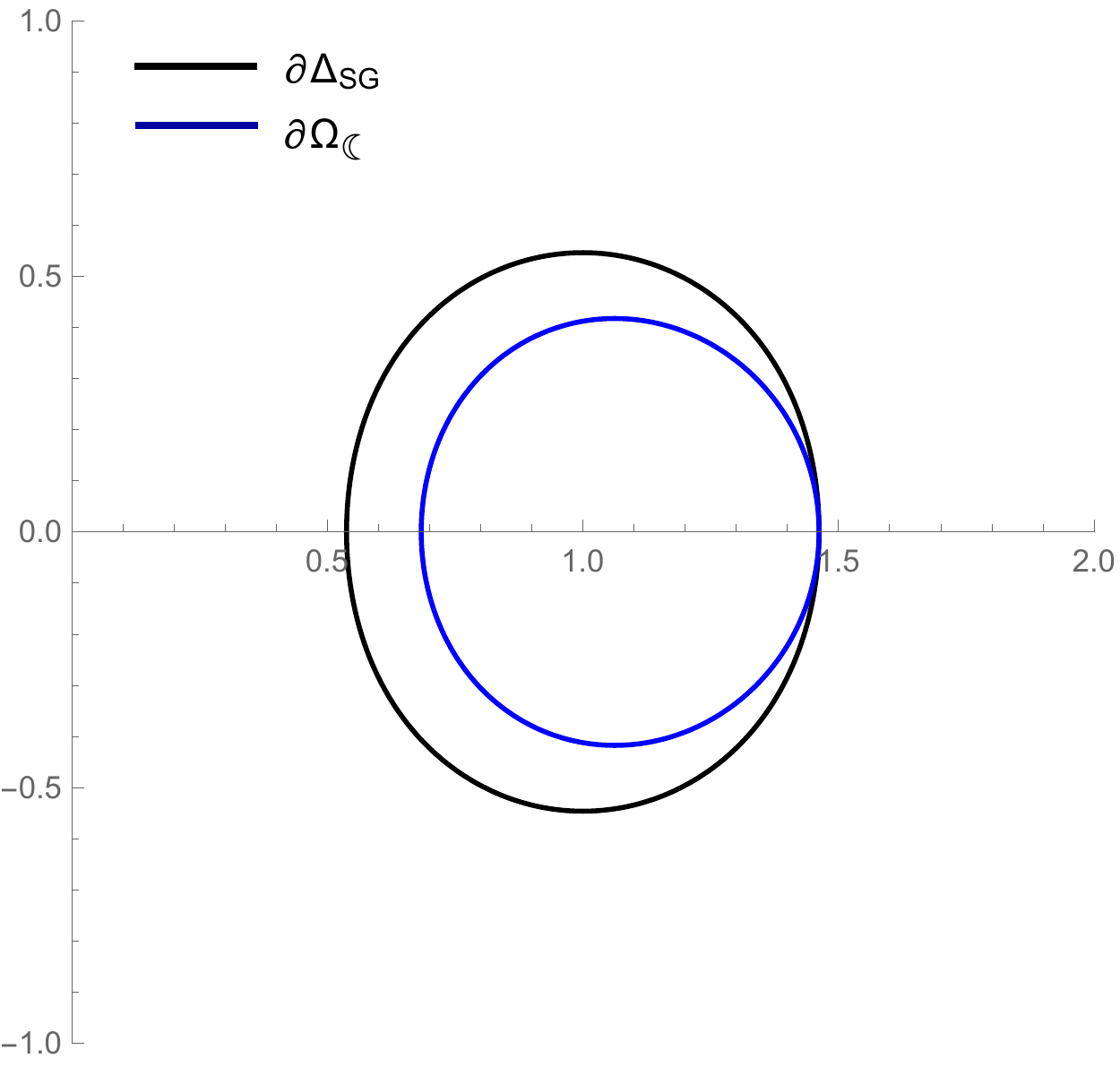}\\
          Sharpness of $ R_{\mathcal{S}^*_{SG}}(\mathcal{S}^*_{\leftmoon})$

           \end{tabular}

           & \begin{tabular}{l}
             \parbox{0.4\linewidth}{%  change the parbox width as appropiate
            \includegraphics[height=5cm, width=5cm]{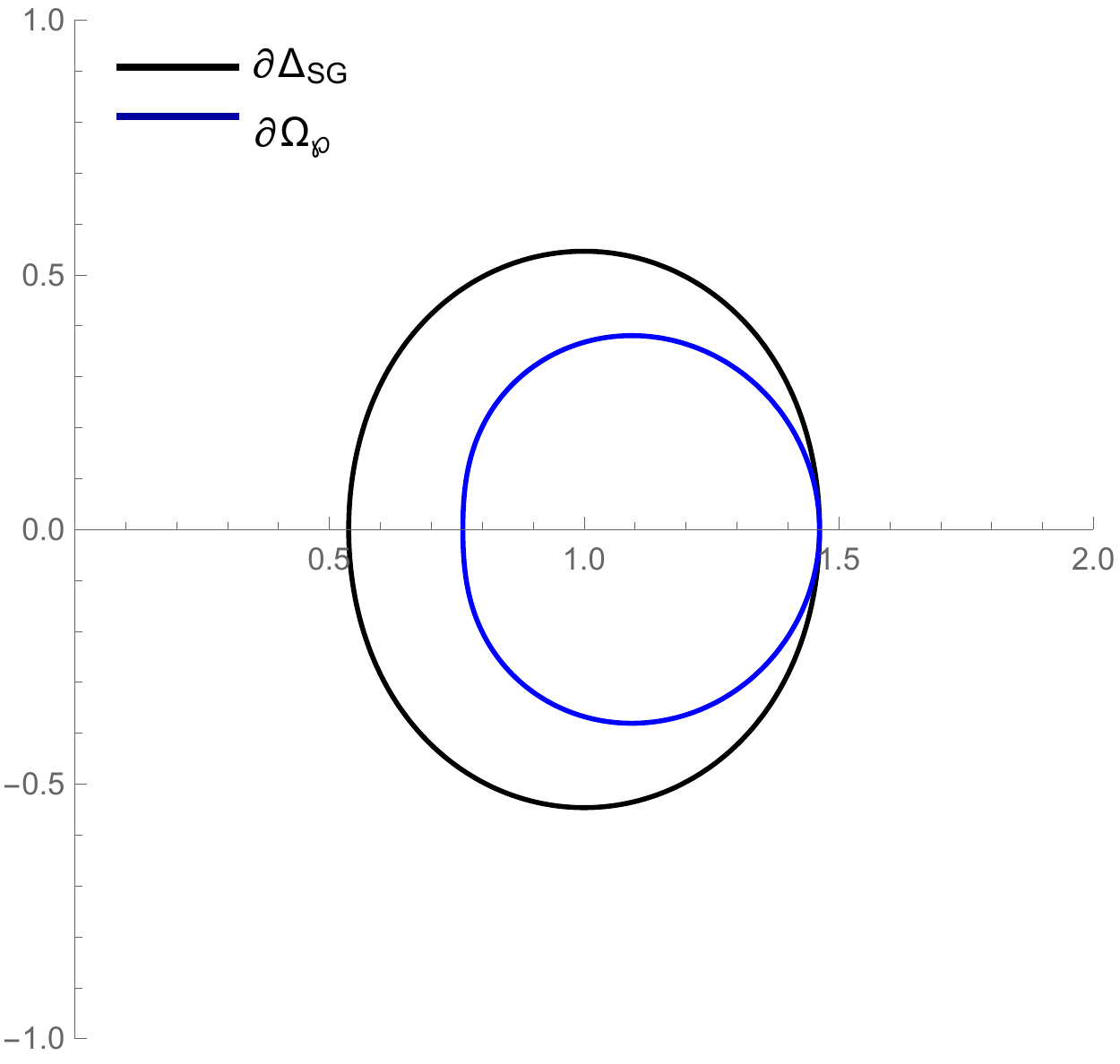}\\
              Sharpness of $ R_{\mathcal{S}^*_{SG}}(\mathcal{S}^*_{\wp})$
               }
         \end{tabular} \\
\end{tabular}
\caption{}
\end{figure}
\end{proof}
Now we consider the following classes for our next result. The class $\mathcal{S}^*_{Ne},$ defined by Wani and Swaminathan~\cite{lateef} by taking $\phi$ as $1+z-z^3/3$ and the class $\mathcal{S}^*_S=\mathcal{S}^*(1+\sin{z})$, introduced by Cho et al.~\cite{vktsine}.
\begin{theorem}
 The $\mathcal{S}^*_{SG}$-radius for the classes $\mathcal{S}^*_{Ne}$ and $\mathcal{S}^*_S$ is given by:
 \begin{itemize}
 \item[$(i)$] $R_{\mathcal{S}^*_{SG}}(\mathcal{S}^*_{Ne})=r_{Ne}\approx 0.43473,$ which is the smallest positive root of the equation $(e+1)(3r+r^3)=3(e-1).$
 \item[$(ii)$]  $R_{\mathcal{S}^*_{SG}}(\mathcal{S}^*_S)=\log\left(\frac{\sqrt{2 \left(1+e^2\right)}+e-1}{1+e}\right)\approx 0.447074.$
 \end{itemize}
\end{theorem}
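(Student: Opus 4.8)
The plan is to reuse the template of the preceding results. For $f$ in either class we have $zf'(z)/f(z)\prec\phi(z)$ with $\phi(0)=1$, so by subordination $zf'(z)/f(z)$ lies, on $|z|=r$, in the closed disk centered at $1$ of radius $M(r):=\max_{|z|=r}|\phi(z)-1|$. Applying Lemma~\ref{main} with center $a=1$ (for which $r_a=(e-1)/(e+1)$, noting $2/(1+e)<1<2e/(1+e)$), I would conclude $f\in\mathcal{S}^*_{SG}$ as soon as $M(r)\leq (e-1)/(e+1)$. The task thus reduces to computing $M(r)$ and solving the resulting inequality in each case.

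For $(i)$ take $\phi(z)=1+z-z^3/3$. Writing $z=re^{i\theta}$, a short computation gives $|z-z^3/3|^2=r^2-(2r^4/3)\cos 2\theta+r^6/9$, which is largest at $\theta=\pi/2$, i.e.\ $z=ir$, so $M(r)=r+r^3/3$. The inequality $r+r^3/3\leq (e-1)/(e+1)$ rearranges to $(e+1)(3r+r^3)\leq 3(e-1)$, identifying the radius as the smallest positive root $r_{Ne}$ of $(e+1)(3r+r^3)=3(e-1)$.

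For $(ii)$ take $\phi(z)=1+\sin z$. Using $|\sin z|^2=\sin^2 x+\sinh^2 y$ for $z=x+iy$, the maximum over $x^2+y^2=r^2$ is attained at $z=ir$, giving $M(r)=\sinh r$. Imposing $\sinh r\leq (e-1)/(e+1)$ and setting $u=e^r$ converts this into the quadratic $(e+1)u^2-2(e-1)u-(e+1)=0$; since $(e-1)^2+(e+1)^2=2(1+e^2)$, its positive root is $u=(\sqrt{2(1+e^2)}+e-1)/(1+e)$, and $r=\log u$ recovers the stated value.

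The step I expect to be most delicate is the maximum-modulus computation for $(ii)$: one must verify that $z=ir$ gives the \emph{global} maximum of $|\sin z|$ on $|z|=r$, not merely a critical point. Using $|\sin z|^2=\sin^2 x+\sinh^2 y$, a Lagrange-multiplier analysis shows that the stationarity conditions force $\sin(2x)/x=\sinh(2y)/y$; since $\sin(2x)/x<2<\sinh(2y)/y$ for $x,y>0$, no interior critical point exists, so the extremum sits on an axis, and $\sinh r>\sin r$ selects $z=ir$. The analogous step for $(i)$ is the elementary extremization above. Finally, since in both parts the extremal modulus is attained in the imaginary direction---where $\Delta_{SG}$ extends beyond the inscribed disk of Lemma~\ref{main}---the bound obtained is a valid radius but is not claimed to be sharp, in agreement with the phrasing of the theorem.
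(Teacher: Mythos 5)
Your proposal is correct and follows essentially the same route as the paper: bound $\left|zf'(z)/f(z)-1\right|$ on $|z|=r$ by $r+r^3/3$ and $\sinh r$ respectively, then apply Lemma~\ref{main} with center $a=1$ (inscribed disk of radius $(e-1)/(e+1)$) and solve the resulting inequalities. Your extra work pinning down where the maxima occur ($z=ir$ in both cases) and your closing remark on why sharpness is not claimed go slightly beyond the paper's one-line triangle-inequality estimates, but the substance of the argument is identical.
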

\begin{proof}
\begin{itemize}
\item[$(i)$] Let $f\in\mathcal{S}^*_{Ne},$ then $zf'(z)/f(z)\prec 1+z-z^3/3.$ Thus on $|z|=r,$
$$\left|\dfrac{zf'(z)}{f(z)}-1\right|=|z-\dfrac{z^3}{3}|\leq r+\dfrac{r^3}{3},$$
which if, not greater than $(e-1)/(e+1),$ is sufficient to conclude that $f\in\mathcal{S}^*_{SG}$ by Lemma~\ref{main}. Note that for $r\leq r_{Ne},$ the above inequality holds and hence the result.
\item[$(ii)$] Let $f\in\mathcal{S}^*_S,$ then $zf'(z)/f(z)\prec 1+\sin{z}.$ So on $|z|=r,$ we have
  \begin{equation*}
    \left|\frac{zf'(z)}{f(z)}-1\right|=|\sin{z}|\leq \sinh{r}.
  \end{equation*}
  Now by using Lemma~\ref{main}, we observe that for $f$ to belong to $\mathcal{S}^*_{SG},$ it is sufficient to prove that $\sinh{r}\leq (e-1)/(e+1).$ Solving this inequality, we obtain the desired result.
\end{itemize}
\end{proof}
Further let us consider the classes, introduced by Ali et al.~\cite{ratio}, defined as follows:
\begin{eqnarray*}
% \nonumber to remove numbering (before each equation)
  \mathcal{G}_1 &=& \left\{f\in\mathcal{A}_n:\frac{f}{g}\in\mathcal{P}_n\;\text{and}\;\frac{g(z)}{z}\in\mathcal{P}_n,\;g\in\mathcal{A}_n\right\}\\
 \mathcal{G}_2 &=& \left\{f\in\mathcal{A}_n:\frac{f}{g}\in\mathcal{P}_n\;\text{and}\;\frac{g(z)}{z}\in\mathcal{P}_n(1/2),\;g\in\mathcal{A}_n\right\}  \\
  \mathcal{G}_3 &=& \left\{f\in\mathcal{A}_n:\left|\frac{f(z)}{g(z)}-1\right|<1\;\text{and}\;\frac{g(z)}{z}\in\mathcal{P}_n,\;g\in\mathcal{A}_n\right\}  \\
 \mathcal{G}_4 &=& \left\{f\in\mathcal{A}_n:\left|\frac{f(z)}{g(z)}-1\right|<1,\;g\in\mathcal{C}_n\right\}.
\end{eqnarray*}
\begin{theorem}
The $\mathcal{S}^*_{SG,n}$ radius of the class $\mathcal{G}_1$ is given by
\begin{equation*}
  R_{\mathcal{S}^*_{SG,n}}(\mathcal{G}_1)=r_{\mathcal{G}_1}:=\frac{(e-1)^{\frac{1}{n}}}{(2n(1+e)+\sqrt{4n^2(1+e)^2+(e-1)^2})^{\frac{1}{n}}}.
\end{equation*}
The estimate is sharp.
\end{theorem}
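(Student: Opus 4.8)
The plan is to exploit the multiplicative structure of $\mathcal{G}_1$. If $f\in\mathcal{G}_1$ with associated function $g$, then setting $p=f/g$ and $q=g/z$ we have $p,q\in\mathcal{P}_n$ and $f(z)=z\,p(z)\,q(z)$. Since $\mathcal{P}_n=\mathcal{P}_n(0)$, both $p$ and $q$ are governed by Lemma~\ref{lemmab} with $\alpha=0$. Taking logarithmic derivatives converts the product into a sum,
\[
\frac{zf'(z)}{f(z)}-1=\frac{zp'(z)}{p(z)}+\frac{zq'(z)}{q(z)},
\]
so the whole estimate reduces to controlling two copies of $zp'/p$.

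Next I would apply Lemma~\ref{lemmab} with $\alpha=0$ to each summand, giving $|zp'/p|\le 2nr^n/(1-r^{2n})$ and the same bound for $q$. The triangle inequality then yields
\[
\left|\frac{zf'(z)}{f(z)}-1\right|\le \frac{4nr^n}{1-r^{2n}}\qquad\text{on }|z|=r,
\]
so that $zf'/f$ lies in the disk centred at $a=1$ of radius $4nr^n/(1-r^{2n})$. Because $2/(1+e)<1<2e/(1+e)$, Lemma~\ref{main} applies at $a=1$ with $r_a=(e-1)/(e+1)$, whence $f\in\mathcal{S}^*_{SG,n}$ as soon as
\[
\frac{4nr^n}{1-r^{2n}}\le\frac{e-1}{e+1}.
\]
Writing $x=r^n$, this is the quadratic inequality $(e-1)x^2+4n(1+e)x-(e-1)\le 0$; its relevant (positive) root, after rationalising the numerator by its conjugate, collapses to $x=(e-1)/\bigl(2n(1+e)+\sqrt{4n^2(1+e)^2+(e-1)^2}\bigr)$, and taking the $n$th root returns exactly $r_{\mathcal{G}_1}$.

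For sharpness I would take $p(z)=q(z)=(1+z^n)/(1-z^n)$, i.e.\ $f(z)=z\bigl((1+z^n)/(1-z^n)\bigr)^2\in\mathcal{G}_1$, for which a direct computation gives $zf'(z)/f(z)-1=4nz^n/(1-z^{2n})$. Evaluating along $z=r$ shows $zf'(z)/f(z)=2e/(1+e)$ precisely at $r=r_{\mathcal{G}_1}$, and this is a boundary point of $\Delta_{SG}$ since there $\log\bigl((2e/(1+e))/(2/(1+e))\bigr)=\log e=1$; hence the radius cannot be enlarged.

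I expect the only delicate points to be algebraic. First, the rationalisation that recasts the unwieldy quadratic root into the stated closed form must be carried out carefully so the $(e-1)^2$ cancellation is visible. Second, the sharpness argument requires confirming that the two extremal Carath\'eodory functions can be chosen identically and real on the positive axis, so that both applications of the triangle inequality are simultaneously tight and the image of $zf'/f$ genuinely meets $\partial\Delta_{SG}$ rather than merely approaching it.
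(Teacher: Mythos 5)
Your proposal is correct and follows essentially the same route as the paper: the decomposition $f(z)=z\,p(z)\,q(z)$ with $p,q\in\mathcal{P}_n$, the bound $|zf'/f-1|\le 4nr^n/(1-r^{2n})$ from Lemma~\ref{lemmab}, the application of Lemma~\ref{main} at $a=1$, and the same extremal pair $f_1(z)=z\bigl((1+z^n)/(1-z^n)\bigr)^2$, $g_1(z)=z(1+z^n)/(1-z^n)$. Your explicit boundary check $\log\bigl(e\bigr)=1$ at $w=2e/(1+e)$ makes the sharpness step slightly more detailed than the paper's, but the argument is the same.
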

\begin{proof}
  Let $f\in\mathcal{G}_1,$ then there exists  $p,\;q\in\mathcal{P}_n$ such that
  \begin{equation*}
    p(z)=\frac{g(z)}{z}\quad\text{and}\quad q(z)=\frac{f(z)}{g(z)}.
  \end{equation*}
 By Lemma~\ref{lemmab}, we have on $|z|=r,$ the following inequalities
  \begin{equation}\label{pqr}
    \left|\frac{zp'(z)}{p(z)}\right|\leq\frac{2n r^n}{1-r^{2n}}\quad\text{and}\quad\left|\frac{zq'(z)}{q(z)}\right|\leq\frac{2nr^n}{1-r^{2n}}.
  \end{equation}
Further, note that $f(z)=g(z)q(z)=zp(z)q(z),$ which upon logarithmic differentiation yields
  \begin{equation}\label{zfp}
    \frac{zf'(z)}{f(z)}=1+\frac{zp'(z)}{p(z)}+\frac{zq'(z)}{q(z)}.
  \end{equation}
  Using~\eqref{pqr}, we obtain
  \begin{equation*}
  \left|\frac{zf'(z)}{f(z)}-1\right|\leq \frac{4nr^n}{1-r^{2n}}.
  \end{equation*}
  By Lemma~\ref{main}, it is clear that $f\in\mathcal{S}^*_{SG}$ if the quantity $4nr^n/(1-r^{2n})$ does not exceed $(e-1)/(e+1).$ This  leads us to $(e-1)r^{2n}+4n(e+1)r^n-(e-1)\leq 0.$ Solving this inequality, we obtain $r\leq r_{\mathcal{G}_1}.$ The sharpness of the bound can be verified by the functions
  \begin{equation*}
    f_1(z)=z\left(\frac{1+z^n}{1-z^n}\right)^2\quad\text{and}\quad g_1(z)=z\left(\frac{1+z^n}{1-z^n}\right).
  \end{equation*}
  Clearly $f_1,g_1\in\mathcal{A}_n$ and $f_1(z)/g_1(z)=g_1(z)/z,$ which belongs to $\mathcal{P}_n.$ So $f_1\in\mathcal{G}_1$ and
  \begin{equation*}
    \frac{zf_1'(z)}{f_1(z)}=1+\frac{4nz^n}{1-z^{2n}}.
  \end{equation*}
  The image domain of $\mathbb{D}$ mapped by $zf_1'(z)/f_1(z)$ touches the boundary of $\Delta_{SG}$ at the points $\pm r_{\mathcal{G}_1}.$
\end{proof}
\begin{theorem}
The $\mathcal{S}^*_{SG,n}$ radius of the class $\mathcal{G}_2$ is given by
\begin{equation*}
  R_{\mathcal{S}^*_{SG,n}}(\mathcal{G}_2)=r_{\mathcal{G}_2}:=\frac{(2(e-1))^{\frac{1}{n}}}{(3n(1+e)+\sqrt{(3n(e+1))^2+4(e-1)(n(e+1)+(e-1))})^{\frac{1}{n}}}.
\end{equation*}
The estimate is sharp.
\end{theorem}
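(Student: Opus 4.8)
The plan is to follow the same strategy used for $\mathcal{G}_1$, now accounting for the weaker membership of $g(z)/z$. First I would set $p(z)=g(z)/z$ and $q(z)=f(z)/g(z)$, so that by the definition of $\mathcal{G}_2$ we have $p\in\mathcal{P}_n(1/2)$ and $q\in\mathcal{P}_n=\mathcal{P}_n(0)$. The factorization $f(z)=zp(z)q(z)$ together with logarithmic differentiation again yields the identity $\frac{zf'(z)}{f(z)}=1+\frac{zp'(z)}{p(z)}+\frac{zq'(z)}{q(z)}$, exactly as in~\eqref{zfp}.

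Next I would apply Lemma~\ref{lemmab} to each factor separately. Taking $\alpha=1/2$ for $p$ collapses the denominator factor $1+(1-2\alpha)r^n$ to $1$ and gives $|zp'(z)/p(z)|\le nr^n/(1-r^n)$, while taking $\alpha=0$ for $q$ gives $|zq'(z)/q(z)|\le 2nr^n/(1-r^{2n})$, in the spirit of~\eqref{pqr}. Adding these and placing the two fractions over the common denominator $1-r^{2n}$ produces the single bound $|zf'(z)/f(z)-1|\le nr^n(3+r^n)/(1-r^{2n})$ on $|z|=r$.

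I would then invoke Lemma~\ref{main} with $a=1$: the disk centred at $1$ lies in $\Delta_{SG}$ provided its radius does not exceed $(e-1)/(e+1)$, so it suffices to impose $nr^n(3+r^n)/(1-r^{2n})\le(e-1)/(e+1)$. Writing $t=r^n$ and clearing denominators turns this into the quadratic inequality $[(e+1)n+(e-1)]t^2+3(e+1)nt-(e-1)\le0$. The admissible range is $0\le t\le t_0$, where $t_0$ is the unique positive root; rationalizing the standard root formula by multiplying through by the conjugate of the numerator converts it into the form $t_0=2(e-1)/\bigl(3n(1+e)+\sqrt{(3n(e+1))^2+4(e-1)(n(e+1)+(e-1))}\bigr)$, and taking the $n$-th root gives precisely $r=r_{\mathcal{G}_2}$.

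Finally, for sharpness I would exhibit $g_2(z)=z/(1-z^n)$ and $f_2(z)=z(1+z^n)/(1-z^n)^2$. One checks directly that $g_2(z)/z=1/(1-z^n)$ has real part exceeding $1/2$ (so it lies in $\mathcal{P}_n(1/2)$) and that $f_2(z)/g_2(z)=(1+z^n)/(1-z^n)\in\mathcal{P}_n$, whence $f_2\in\mathcal{G}_2$. A short computation gives $zf_2'(z)/f_2(z)=1+nz^n(3+z^n)/(1-z^{2n})$, and at $z=r$ with $r^n=t_0$ this attains the value $2e/(e+1)$, the right-hand boundary point of $\Delta_{SG}$; hence no larger radius is admissible. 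The only genuinely delicate point I anticipate is the algebra of the third step --- correctly combining the two Lemma~\ref{lemmab} bounds and recognizing that the resulting quadratic rationalizes \emph{exactly} to the stated closed form --- while the membership checks and the extremal computation are routine.
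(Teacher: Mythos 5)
Your proposal is correct and follows essentially the same route as the paper: the same decomposition $f(z)=zp(z)q(z)$ with $p\in\mathcal{P}_n(1/2)$, $q\in\mathcal{P}_n$, the same two applications of Lemma~\ref{lemmab} giving the bound $nr^n(3+r^n)/(1-r^{2n})$, the same quadratic inequality via Lemma~\ref{main}, and the same extremal pair $f_2$, $g_2$. Your sharpness check is in fact slightly more explicit than the paper's, since you note that $zf_2'/f_2$ attains the boundary value $2e/(e+1)$ of $\Delta_{SG}$ at $r^n=t_0$, which is exactly the right justification.
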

\begin{proof}
Let $f\in\mathcal{G}_2.$ Now, let us define
  \begin{equation*}
    p(z)=\frac{g(z)}{z}\quad\text{and}\quad q(z)=\frac{f(z)}{g(z)}
  \end{equation*}
  so that $f(z)=zp(z)q(z),$ where $p\in\mathcal{P}_n(1/2)$ and $q\in\mathcal{P}_n.$ From~\eqref{zfp} and Lemma~\ref{lemmab}, we have
    \begin{equation*}
    \left|\frac{zf'(z)}{f(z)}-1\right|\leq\frac{2n r^n}{1-r^{2n}}+\frac{nr^n}{1-r^n}=\frac{nr^{2n}+3nr^n}{1-r^{2n}}\quad\text{on}\;|z|=r.
  \end{equation*}
  In view of Lemma~\ref{main}, $f$ is a member of $\mathcal{S}^*_{SG}$ if $(nr^{2n}+3nr^n)/(1-r^{2n})\leq (e-1)/(e+1).$ Equivalently, $(n(e+1)+e-1)r^{2n}+3n(e+1)r^n-(e-1)\leq 0.$ Solving this inequality, we obtain $r\leq r_{\mathcal{G}_2}.$ The sharpness of the bound can be verified by the functions
  \begin{equation*}
    f_2(z)=\frac{z(1+z^n)}{(1-z^n)^2}\quad\text{and}\quad g_2(z)=\frac{z}{1-z^n}.
  \end{equation*}
  Clearly $f_2,g_2\in\mathcal{A}_n,$ $f_2/g_2\in\mathcal{P}_n$ and $g_2/z\in\mathcal{P}(1/2).$ Hence $f_2$ is a member of $\mathcal{G}_2$ and
  \begin{equation*}
    \frac{zf_2'(z)}{f_2(z)}=\frac{1+3nz^n+(n-1)z^{2n}}{1-z^{2n}}.
  \end{equation*}
The radius estimate is sharp since $zf'_2(z)/f_2(z)$ maps $\mathbb{D}$ onto a domain which touches the boundary of $\Delta_{SG}$ at $z=r_{\mathcal{G}_2}.$
\end{proof}
\begin{theorem}
The $\mathcal{S}^*_{SG,n}$ radius of the class $\mathcal{G}_3$ is given by
\begin{equation*}
  R_{\mathcal{S}^*_{SG,n}}(\mathcal{G}_3)=r_{\mathcal{G}_3}:=\frac{(2(e-1))^{\frac{1}{n}}}{(3n(1+e)+\sqrt{(3n(e+1))^2+4(e-1)(n(e+1)+(e-1))})^{\frac{1}{n}}}.
\end{equation*}
The estimate is sharp.
\end{theorem}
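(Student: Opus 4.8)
The plan is to run the same factorisation argument used for $\mathcal{G}_2$, after recasting the disk condition on $f/g$ as a Carath\'eodory condition that fits Lemma~\ref{lemmab}. Given $f\in\mathcal{G}_3$, I set $p(z)=g(z)/z$ and $q(z)=f(z)/g(z)$, so that $p\in\mathcal{P}_n$ and $q(0)=1$ with $|q(z)-1|<1$ on $\mathbb{D}$. The key observation is that the Möbius inversion $w\mapsto 1/w$ carries the disk $\{|w-1|<1\}$ onto the half-plane $\{\RE w>1/2\}$; hence $P:=1/q=g/f$ satisfies $\RE P>1/2$, and since $q=1+c_nz^n+\cdots$ forces $P=1-c_nz^n+\cdots$, we obtain $P\in\mathcal{P}_n(1/2)$. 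This is precisely the ingredient that produced the factor $nr^n/(1-r^n)$ in the proof for $\mathcal{G}_2$, and it explains at once why $r_{\mathcal{G}_3}=r_{\mathcal{G}_2}$.

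Writing $f=zpq$ and differentiating logarithmically gives $zf'(z)/f(z)-1 = zp'(z)/p(z)+zq'(z)/q(z)$, where $zq'/q=-zP'/P$. Applying Lemma~\ref{lemmab} with $\alpha=0$ to $p$ and with $\alpha=1/2$ to $P$ yields, on $|z|=r$,
\begin{equation*}
\left|\frac{zf'(z)}{f(z)}-1\right|\le \frac{2nr^n}{1-r^{2n}}+\frac{nr^n}{1-r^n}=\frac{nr^{2n}+3nr^n}{1-r^{2n}}.
\end{equation*}
By Lemma~\ref{main}, $f\in\mathcal{S}^*_{SG}$ as soon as the right-hand side does not exceed $(e-1)/(e+1)$; setting $t=r^n$, this is the quadratic inequality $(n(e+1)+(e-1))t^2+3n(e+1)t-(e-1)\le0$, whose admissible root, after rationalising the numerator, is exactly $t=r_{\mathcal{G}_3}^{\,n}$. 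This reproduces the stated value of $r_{\mathcal{G}_3}$.

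For sharpness I would take $g_3(z)=z(1+z^n)/(1-z^n)$ and $f_3(z)=z(1+z^n)^2/(1-z^n)$, so that $g_3/z=(1+z^n)/(1-z^n)\in\mathcal{P}_n$ and $f_3/g_3=1+z^n$ satisfies $|f_3/g_3-1|=|z^n|<1$, placing $f_3$ in $\mathcal{G}_3$. A short computation gives $zf_3'(z)/f_3(z)=1+(3nz^n-nz^{2n})/(1-z^{2n})$, and along the ray where $z^n=-r^n$ the right-hand side is real and equals $1-(3nr^n+nr^{2n})/(1-r^{2n})$, which reaches the left-most boundary point $2/(1+e)$ of $\Delta_{SG}$ precisely at $r=r_{\mathcal{G}_3}$. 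The one genuinely delicate point, and the place where $\mathcal{G}_3$ differs from a mechanical copy of the $\mathcal{G}_2$ argument, is this choice of extremiser: one must pick $q=1+z^n$ (equivalently $P=1/(1+z^n)$) rather than the naive $q=1-z^n$, because only then do the two logarithmic-derivative terms add in phase and saturate the triangle inequality at a point actually lying on $\partial\Delta_{SG}$; the opposite choice makes them partially cancel and fails to be sharp. Confirming that this alignment occurs exactly at $z^n=-r^n$ is the main thing I expect to verify with care.
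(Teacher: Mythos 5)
Your proposal is correct and follows essentially the same route as the paper: the identical decomposition $f=zpq$ with $p=g/z\in\mathcal{P}_n$ and $g/f\in\mathcal{P}_n(1/2)$ (the paper phrases this directly as $q=g/f$, you via $P=1/q$, which is the same observation), the same application of Lemma~\ref{lemmab} and Lemma~\ref{main} leading to the same quadratic in $r^n$, and the same extremal pair $f_3,g_3$. Your explicit verification that the extremum is attained along the ray $z^n=-r^n$ is a welcome refinement of the paper's unproved assertion that $(zf_3'/f_3)(\mathbb{D})$ touches $\partial\Delta_{SG}$ at $-r_{\mathcal{G}_3}$, but it does not change the substance of the argument.
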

\begin{proof}
Let $f\in\mathcal{G}_3.$ So, we may define
  \begin{equation*}
    p(z)=\frac{g(z)}{z}\quad\text{and}\quad q(z)=\frac{g(z)}{f(z)}.
  \end{equation*}
In view of the following implication
  \begin{equation*}
    \left|\dfrac{f(z)}{g(z)}-1\right|<1\iff \frac{g}{f}\in\mathcal{P}_n\left(\frac{1}{2}\right),
  \end{equation*}
it is obvious that $p\in\mathcal{P}_n$ and $q\in\mathcal{P}_n(1/2).$ Note that $f(z)=zp(z)/q(z)$ and thus
\begin{equation*}
  \frac{zf'(z)}{f(z)}=1+\frac{p'(z)}{p(z)}-\frac{zq'(z)}{q(z)}.
\end{equation*}
By using Lemma~\ref{lemmab}, we obtain
    \begin{equation*}
    \left|\frac{zf'(z)}{f(z)}-1\right|\leq\frac{nr^{2n}+3nr^n}{1-r^{2n}}\quad\text{on}\;|z|=r.
  \end{equation*}
  In order to prove $f\in\mathcal{S}^*_{SG},$  it suffices to show that $(nr^{2n}+3nr^n)/(1-r^{2n})\leq (e-1)/(e+1),$ in view of Lemma~\ref{main}. This inequality reduces to $(n(e+1)+e-1)r^{2n}+3n(e+1)r^n-(e-1)\leq 0.$ Solving this, we obtain $r\leq r_{\mathcal{G}_3}.$ The sharpness of the bound can be verified by the functions
  \begin{equation*}
    f_3(z)=\frac{z(1+z^n)^2}{1-z^n}\quad\text{and}\quad g_3(z)=\frac{z(1+z^n)}{1-z^n}.
  \end{equation*}
  Note that
  \begin{equation*}
    \left|\frac{f_3(z)}{g_3(z)}-1\right|=|z|^n <1\quad\text{and}\quad\frac{g_3(z)}{z}=\frac{1+z^n}{1-z^n}\in\mathcal{P}_n,
 \end{equation*}
  which implies that $f_3\in\mathcal{G}_3.$ Moreover, it can be observed that the domain $(zf'_3/f_3)(\mathbb{D})$ touches the boundary of $\Delta_{SG}$ at $-r_{\mathcal{G}_3}.$
\end{proof}
\begin{theorem}
The $\mathcal{S}^*_{SG,n}$ radius of the class $\mathcal{G}_4$ is given by
\begin{equation*}
  R_{\mathcal{S}^*_{SG,n}}(\mathcal{G}_4)=r_{\mathcal{G}_4}:=\frac{(2(e-1))^{\frac{1}{n}}}{((n+1)(1+e)+\sqrt{(n+1)^2(1+e)^2+4(e-1)((e+1)n-2)})^{\frac{1}{n}}}.
\end{equation*}
The estimate is sharp.
\end{theorem}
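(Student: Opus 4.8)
The plan is to follow the product-decomposition scheme of the treatments of $\mathcal{G}_1$--$\mathcal{G}_3$, but the convexity of $g$ must be exploited through disk estimates for logarithmic derivatives rather than the plain triangle inequality, and this is exactly what produces the larger radius recorded above. I would first set $Q(z)=f(z)/g(z)$, so that $|Q-1|<1$ and $Q(0)=1$, and take the logarithmic derivative of $f=gQ$ to obtain
\[
\frac{zf'(z)}{f(z)}=\frac{zg'(z)}{g(z)}+\frac{zQ'(z)}{Q(z)}.
\]
Because $g\in\mathcal{C}_n$, the Marx--Strohh\"acker theorem shows that $g$ is starlike of order $1/2$, so $zg'/g\in\mathcal{P}_n(1/2)$; applying Lemma~\ref{Plemma}$(ii)$ \emph{directly} to $zg'/g$ then confines it, for $|z|=r$, to the disk centred at $1/(1-r^{2n})$ of radius $r^n/(1-r^{2n})$.

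The decisive point is the second summand, where I would deliberately avoid the modulus bound of Lemma~\ref{lemmab}: combining the convex disk above with the bound $|zQ'/Q|\le nr^n/(1-r^n)$ only yields $(n+1)r^n/(1-r^n)\le(e-1)/(e+1)$, hence the strictly smaller value $\big((e-1)/((n+1)(e+1)+(e-1))\big)^{1/n}$. Instead, writing $q=1/Q=g/f\in\mathcal{P}_n(1/2)$ as $q=(1-\omega)^{-1}$ with $\omega$ a Schwarz function vanishing to order $n$ at the origin, I would establish the refined disk estimate that $zQ'/Q=-zq'/q$ lies in the disk centred at $-nr^{2n}/(1-r^{2n})$ of radius $nr^n/(1-r^{2n})$ (the extremal $\omega(z)=z^n$, giving $q=1/(1-z^n)$, traces its boundary). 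Adding the two disks, $zf'/f$ is contained in the disk centred at $a=(1-nr^{2n})/(1-r^{2n})\le1$ with radius $\rho=(n+1)r^n/(1-r^{2n})$. Since $a\le1$, Lemma~\ref{main} certifies $f\in\mathcal{S}^*_{SG,n}$ as soon as $\rho\le(e-1)/(e+1)-(1-a)$, which rearranges to
\[
\frac{(n+1)r^n+(n-1)r^{2n}}{1-r^{2n}}\le\frac{e-1}{e+1},
\]
that is, $\big((e+1)n-2\big)r^{2n}+(n+1)(e+1)r^n-(e-1)\le0$; the positive root in $r^n$ delivers $r_{\mathcal{G}_4}$.

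I expect the main obstacle to be the refined disk estimate for the logarithmic derivative $zq'/q$ of a $\mathcal{P}_n(1/2)$ function, since only the modulus version (Lemma~\ref{lemmab}) is at hand: this requires a Schwarz--Pick analysis of $q=(1-z^n\psi)^{-1}$ with $\|\psi\|_\infty\le1$, showing that the term carrying $\psi'$ keeps $zq'/q$ inside the circle swept out by the constant-$\psi$ extremals. A second delicate point is that Lemma~\ref{main} must be invoked at the off-centre value $a=(1-nr^{2n})/(1-r^{2n})$, not at $1$, so that the gain $1-a$ contributed by the convex factor is credited against the radius; this is precisely what separates $\mathcal{G}_4$ from the earlier classes. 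For sharpness I would test $f_4(z)=z(1+z^n)/(1-z^n)^{1/n}$, for which $g_4(z)=z/(1-z^n)^{1/n}$ gives $zg_4'/g_4=1/(1-z^n)$ and $f_4/g_4=1+z^n$, so that $zf_4'/f_4=1/(1-z^n)+nz^n/(1+z^n)$ attains the boundary value $2/(1+e)$ of $\Delta_{SG}$ exactly when $z^n=-r_{\mathcal{G}_4}^n$; verifying that $g_4\in\mathcal{C}_n$ (immediate for $n=1$) is the remaining check needed to confirm sharpness.
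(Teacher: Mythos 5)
Your decomposition ($f=gQ$ with $g$ convex, hence $zg'/g\in\mathcal{P}_n(1/2)$ by Marx--Strohh\"{a}cker, controlled by the disk of Lemma~\ref{Plemma}$(ii)$) is the same as the paper's, but at the decisive step you diverge --- and your instinct is sound, in fact sounder than the source. The paper bounds the second factor by the plain modulus estimate of Lemma~\ref{lemmab}, so its disk for $zf'/f$ is centred at $a=1/(1-r^{2n})>1$ with radius $((n+1)r^n+nr^{2n})/(1-r^{2n})$, and it then tests containment in $\Delta_{SG}$ against $a-\tfrac{2}{1+e}$. But for $a>1$ the inscribed radius furnished by Lemma~\ref{main} is $\tfrac{e-1}{e+1}-(a-1)=\tfrac{2e}{1+e}-a$, not $a-\tfrac{2}{1+e}$: the paper has used the $a\le1$ branch of $|a-1|$, and at the claimed radius its disk protrudes past the right extreme $2e/(1+e)$ of $\Delta_{SG}$. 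A correct application of Lemma~\ref{main} to the paper's disk yields only the root of $[(1+e)n+2e]r^{2n}+(n+1)(1+e)r^n-(e-1)=0$, which for $n=1$ is $(e-1)/(3e+1)\approx0.188$, exactly the phenomenon you point out: the crude bound cannot reach $r_{\mathcal{G}_4}\approx0.220$. Your refined off-centre disk for $zQ'/Q$, centred at $-nr^{2n}/(1-r^{2n})$, moves the centre of the summed disk to $(1-nr^{2n})/(1-r^{2n})\le1$, where the branch of Lemma~\ref{main} you invoke is the legitimate one, and it reproduces precisely the inequality $((e+1)n-2)r^{2n}+(n+1)(e+1)r^n-(e-1)\le0$ and hence $r_{\mathcal{G}_4}$. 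So your plan is not merely an alternative route; it is the repair that the paper's own argument needs.

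The genuine gap is the one you flag yourself: the refined disk estimate $\left|zQ'/Q+nr^{2n}/(1-r^{2n})\right|\le nr^n/(1-r^{2n})$ is asserted, not proved, and it is delicate because it is \emph{false} without a restriction on $r$. For $n=1$ it holds exactly when $r\le1/3$: already the competitor $\omega$ with $\omega(r)=0$ and $|\omega'(r)|$ maximal escapes the disk once $r>1/2$, and boundary-near values $\omega(r)$ on the positive axis escape for $r>1/3$. So the Schwarz--Pick analysis you postpone is the heart of the theorem and must exploit the smallness of $r_{\mathcal{G}_4}$; carrying it out via Dieudonn\'{e}'s lemma for $\omega=z^n\psi$ reduces the claim to the sufficient condition $2r(1+r^n)^2\le n(1-r^2)(1-r^n)$, which can be checked to hold for all $r\le r_{\mathcal{G}_4}$, but until that is written down the proposal is incomplete at its one essential point. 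On sharpness, your computation is correct: $zf_4'/f_4$ meets $\partial\Delta_{SG}$ at the value $2/(1+e)$ when $z^n=-r_{\mathcal{G}_4}^n$ (contact at both $\pm r_{\mathcal{G}_4}$, as the paper asserts, is special to $n=1$). Your caution about $g_4\in\mathcal{C}_n$ is also justified: for $n=2$ one finds $1+zg_4''(z)/g_4'(z)=(1+2z^2)/(1-z^2)$, which has negative real part near $z=0.9i$, so $g_4$ is \emph{not} convex for $n\ge2$; this defect of the extremal pair is shared by the paper, and for $n\ge2$ the sharpness assertion genuinely requires a different example in both texts.
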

\begin{proof}
Let $f\in\mathcal{G}_4.$ Now, suppose $q(z)=g(z)/f(z),$ where $g\in\mathcal{A}_n$ is a convex function.
As deduced in the last theorem, we know that $q\in\mathcal{P}_n(1/2).$ By using Lemma~\ref{lemmab}, we obtain
    \begin{equation}\label{q}
    \left|\frac{zq'(z)}{q(z)}\right|\leq\frac{nr^n}{1-r^{n}},\quad |z|=r.
  \end{equation}
  Also since $g$ is convex, we have $zg'/g$ is in $\mathcal{P}(1/2).$ Hence
  \begin{equation}\label{g}
    \left|\frac{zg'(z)}{g(z)}-\frac{1}{1-r^{2n}}\right|\leq \frac{r^n}{1-r^{2n}}
  \end{equation}
  by Lemma~\ref{lemmab}. Since $f=g/q,$ we obtain
  \begin{equation*}
    \frac{zf'(z)}{f(z)}=\frac{zg'(z)}{g(z)}-\frac{zq'(z)}{q(z)}.
  \end{equation*}
  In view of~\eqref{q} and~\eqref{g}, we have
\begin{equation*}
  \left|\frac{zf'(z)}{f(z)}-\frac{1}{1-r^{2n}}\right|\leq \frac{(n+1)r^n+nr^{2n}}{1-r^{2n}}.
\end{equation*}
 For each $r\in(0,1),$ the quantity $1-r^{2n}$ is less than 1 and thus Lemma~\ref{main} implies that $f\in\mathcal{S}^*_{SG}$ provided
 \begin{equation*}
   \frac{(n+1)r^n+nr^{2n}}{1-r^{2n}}\leq\frac{1}{1-r^{2n}}-\frac{2}{1+e}.
 \end{equation*}
 This inequality reduces to
 \begin{equation*}
   ((1+e)n-2)r^{2n}+(n+1)(1+e)r^n+1-e\leq 0.
 \end{equation*}
  Solving this inequality, we obtain $r\leq r_{\mathcal{G}_4}.$ The functions
  \begin{equation*}
    f_4(z)=\frac{z(1+z^n)}{(1-z^n)^{1/n}}\quad\text{and}\quad g_4(z)=\frac{z}{(1-z^n)^{1/n}}
  \end{equation*}
  validate the sharpness of the bound. Note that $|f_4(z)/g_4(z)-1|=|z|^n <1$ and $g_4\in\mathcal{C}_n,$ which implies $f_4\in\mathcal{G}_4.$  Moreover, it can be observed that $zf'_4/f_4$ maps $\mathbb{D}$ onto a domain that touches the boundary of $\Delta_{SG}$ at $\pm r_{\mathcal{G}_4}.$
\end{proof}
For $0\leq\alpha<1,$ Reade~\cite{reade} introduced the class of close-to-starlike functions of type $\alpha,$ given by
\begin{equation*}
  \mathcal{CS}^*(\alpha):=\left\{f\in\mathcal{A}_n:\frac{f}{g}\in\mathcal{P}_n,\;g\in\mathcal{S}^*(\alpha)\right\}.
\end{equation*}
\begin{theorem}\label{cstheo}
  The sharp $\mathcal{S}^*_{SG,n}$ radius for the class $\mathcal{CS}^*_n(\alpha)$ is given by
\begin{eqnarray*}
  R_{\mathcal{S}^*_{SG,n}}(\mathcal{CS}^*_n(\alpha))=r_{cs}&:=&(e-1)/((1+e)(1+n-\alpha)\\
  &+&\sqrt{(e+1)^2(1+n-\alpha)^2+(e-1)((1-2\alpha)(e+1)+2e)})
\end{eqnarray*}
\end{theorem}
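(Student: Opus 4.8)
The plan is to reuse the multiplicative factorization strategy already employed for the families $\mathcal{G}_1$--$\mathcal{G}_4$, the new feature being that the starlike factor now contributes an \emph{off-center} disk rather than a disk about $1$. First I would write $f = g\,q$, where $g\in\mathcal{A}_n$ is the defining starlike function of order $\alpha$ and $q := f/g \in \mathcal{P}_n$; consequently $p := zg'/g \in \mathcal{P}_n(\alpha)$. Logarithmic differentiation yields $zf'(z)/f(z) = zg'(z)/g(z) + zq'(z)/q(z)$, so on $|z|=r$ it suffices to control the two summands separately.

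For the starlike factor I would invoke Lemma~\ref{Plemma}$(ii)$, which confines $zg'/g$ to the disk centered at $a := (1+(1-2\alpha)r^{2n})/(1-r^{2n})$ with radius $2(1-\alpha)r^n/(1-r^{2n})$. For the Carath\'eodory factor $q\in\mathcal{P}_n=\mathcal{P}_n(0)$, Lemma~\ref{lemmab} (with $\alpha=0$) gives $|zq'/q|\le 2nr^n/(1-r^{2n})$. Adding these via the triangle inequality places $zf'/f$ in the disk centered at the same $a$ but of enlarged radius $2(1-\alpha+n)r^n/(1-r^{2n})$. A one-line computation gives $a-1 = 2(1-\alpha)r^{2n}/(1-r^{2n}) \ge 0$, so $a\ge 1$ and I land in the right-hand regime of Lemma~\ref{main}; hence $f\in\mathcal{S}^*_{SG,n}$ as soon as
\begin{equation*}
\frac{2(1-\alpha+n)r^n}{1-r^{2n}} \;\le\; \frac{e-1}{e+1} - (a-1).
\end{equation*}

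Clearing the denominator $1-r^{2n}$ and multiplying by $e+1$ turns this into a quadratic inequality in $t := r^n$,
\begin{equation*}
\bigl[(1-2\alpha)(e+1)+2e\bigr]\,t^2 + 2(e+1)(1-\alpha+n)\,t - (e-1) \le 0,
\end{equation*}
where I use the identity $(e-1)+2(e+1)(1-\alpha) = (1-2\alpha)(e+1)+2e$ to fold the shift $a-1$ into the leading coefficient. Since the constant term is negative there is a unique admissible positive root, and rationalizing it collapses the discriminant to $(e+1)^2(1-\alpha+n)^2 + (e-1)\bigl((1-2\alpha)(e+1)+2e\bigr)$, reproducing the stated constant $r_{cs}$. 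I expect this algebraic reduction to be the only delicate step: the off-center term $-(a-1)$ must be carried correctly and the positive root rationalized (multiplying by the conjugate) so that the closed form matches.

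Finally, for sharpness I would exhibit the extremal pair $g(z) = z(1-z^n)^{-2(1-\alpha)/n}$ and $q(z) = (1+z^n)/(1-z^n)$, giving $f(z) = z(1+z^n)(1-z^n)^{-(2(1-\alpha)/n+1)}$ with $g\in\mathcal{S}^*_n(\alpha)$ and $f/g\in\mathcal{P}_n$, so $f\in\mathcal{CS}^*_n(\alpha)$. For this choice one computes $zf'(z)/f(z) = (1+(1-2\alpha)z^n)/(1-z^n) + 2nz^n/(1-z^{2n})$, which on the positive real axis is exactly the rightmost point $a+\rho$ of the bounding disk; at the value of $r$ for which $r^n$ equals the critical root this equals $2e/(1+e)$, the rightmost boundary point of $\Delta_{SG}$ on the real axis. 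Thus $zf'/f$ touches $\partial\Delta_{SG}$ precisely at the claimed radius, so the estimate cannot be enlarged and is sharp.
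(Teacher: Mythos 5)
Your proposal is correct and follows essentially the same route as the paper: the same factorization $f=gq$ with $q=f/g\in\mathcal{P}_n$, the disk for $zg'/g$ from Lemma~\ref{Plemma}$(ii)$ combined with the bound on $|zq'/q|$ from Lemma~\ref{lemmab}, the same application of Lemma~\ref{main} in the $a\ge 1$ regime, the same quadratic in $r^n$, and the same extremal pair (your denominator $(1-z^n)^{(n+2-2\alpha)/n}$ even corrects an apparent typo in the paper, which writes $(1-z)$ there). Your explicit verification that $zf'/f$ evaluated at the critical radius equals $a+\rho=2e/(1+e)$, the rightmost real boundary point of $\Delta_{SG}$, is a welcome detail the paper only asserts.
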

\begin{proof}
  Let $f\in\mathcal{CS}^*_n(\alpha).$ Then there is some function $g\in\mathcal{S}^*_n(\alpha)$ such that $h=f/g\in\mathcal{P}_n.$ Using Lemma~\ref{lemmab}, we get
  \begin{equation}\label{h}
    \left|\frac{zh'(z)}{h(z)}\right|\leq \frac{2nr^n}{1-r^{2n}}.
  \end{equation}
Since $g\in\mathcal{S}^*_n(\alpha)$ and $zg'/g\in\mathcal{P}_n(\alpha),$ again by using Lemma~\ref{lemmab}, we obtain
\begin{equation}\label{gr}
  \left|\frac{zg'(z)}{g(z)}-\frac{1+(1-2\alpha)r^{2n}}{1-r^{2n}}\right|\leq \frac{2(1-\alpha)r^n}{1-r^{2n}}.
\end{equation}
Since $f=hg,$ we have
\begin{equation*}
  \frac{zf'(z)}{f(z)}=\frac{zg'(z)}{g(z)}+\frac{zh'(z)}{h(z)}
\end{equation*}
From~\eqref{h} and~\eqref{gr}, it follows that
\begin{equation*}
  \left|\frac{zf'(z)}{f(z)}-\frac{1+(1-2\alpha)r^{2n}}{1-r^{2n}}\right|\leq \frac{2(1+n-\alpha)r^n}{1-r^{2n}}.
\end{equation*}
Note that the above inequality represents a disk with center $a$ and radius $r_0$ given by
\begin{equation*}
a:=\frac{1+(1-2\alpha)r^{2n}}{1-r^{2n}}\quad\text{and}\quad r_0:=\frac{2(1+n-\alpha)r^n}{1-r^{2n}}.
\end{equation*}
Since $a>1,$ we have from Lemma~\ref{main} that $f$ is a member of the class $\mathcal{S}^*_{SG}$ if $r_0\leq (2e)/(e+1)-a$ or equivalently,
\begin{equation*}
  ((1-2\alpha)(e+1)+2e)r^{2n}+2(e+1)(1+n-\alpha)r^n+1-e\leq 0.
\end{equation*}
Solving this inequality, we obtain $r\leq r_{cs}.$ The sharpness of this result can be verified by the following functions.
\begin{equation*}
  f(z)=\frac{z(1+z^n)}{(1-z)^{(n+2-2\alpha)/n}}\quad\text{and}\quad g(z)=\frac{z}{(1-z^n)^{(2-2\alpha)/n}}.
\end{equation*}
Note that $f/g=(1+z^n)/(1-z^n)\in\mathcal{P}_n$ and $g\in\mathcal{S}^*_n(\alpha),$ which ensures that $f\in\mathcal{CS}^*_n(\alpha).$
\end{proof}
For our next result, let us recall the class $\mathcal{W}_n,$ given as
\begin{equation*}
  \mathcal{W}_n:=\{f\in\mathcal{A}_n: f/z\in\mathcal{P}_n\},
\end{equation*}
which was introduced by MacGregor~\cite{mac}.
\begin{theorem}
  The sharp $\mathcal{S}^*_{SG,n}$ radius for $\mathcal{W}_n$ is
  \begin{equation*}
    R_{\mathcal{S}^*_{SG,n}}(\mathcal{W}_n):=r_w=\left(\frac{e-1}{\sqrt{n^2(e+1)^2+(e-1)^2}+n(e+1)}\right)^{1/n}.
  \end{equation*}
\end{theorem}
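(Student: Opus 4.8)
The plan is to follow the same template used in the preceding radius theorems, specialized to a single Carath\'eodory factor. Since $f\in\mathcal{W}_n$ means precisely that $p(z):=f(z)/z\in\mathcal{P}_n=\mathcal{P}_n(0)$, I would write $f(z)=zp(z)$ and differentiate logarithmically to get
$$\frac{zf'(z)}{f(z)}=1+\frac{zp'(z)}{p(z)}.$$
The decisive feature is that the resulting disk is centered at $a=1$, which is the most favorable case of Lemma~\ref{main}: since $|a-1|=0$, the admissible radius is the full value $r_a=(e-1)/(e+1)$, with no loss. This is the one-factor analogue of the $\mathcal{G}_1$ computation, where two factors produced $4nr^n/(1-r^{2n})$ instead.

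First I would apply Lemma~\ref{lemmab} with $\alpha=0$, giving $|zp'(z)/p(z)|\leq 2nr^n/(1-r^{2n})$ on $|z|=r$, so that
$$\left|\frac{zf'(z)}{f(z)}-1\right|\leq\frac{2nr^n}{1-r^{2n}}.$$
By Lemma~\ref{main} this disk is contained in $\Delta_{SG}$ exactly when $2nr^n/(1-r^{2n})\leq(e-1)/(e+1)$, which upon clearing denominators becomes the quadratic inequality $(e-1)r^{2n}+2n(e+1)r^n-(e-1)\leq0$ in the variable $t=r^n$.

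Next I would solve this quadratic. The relevant positive root is $t=\bigl(-n(e+1)+\sqrt{n^2(e+1)^2+(e-1)^2}\bigr)/(e-1)$; rationalizing by multiplying numerator and denominator by the conjugate $\sqrt{n^2(e+1)^2+(e-1)^2}+n(e+1)$ collapses the numerator to $(e-1)^2$ and yields the advertised closed form $r^n\leq(e-1)/\bigl(\sqrt{n^2(e+1)^2+(e-1)^2}+n(e+1)\bigr)$, hence $r\leq r_w$. This rationalization is the only mildly delicate algebraic step, and I do not anticipate any genuine obstacle beyond it.

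Finally, for sharpness I would take the extremal Carath\'eodory choice $p(z)=(1+z^n)/(1-z^n)$, that is $f_w(z)=z(1+z^n)/(1-z^n)\in\mathcal{W}_n$. A short computation gives
$$\frac{zf_w'(z)}{f_w(z)}=1+\frac{2nz^n}{1-z^{2n}},$$
and evaluating at $z=r_w$, where $2nr_w^n/(1-r_w^{2n})=(e-1)/(e+1)$ by the boundary case of the inequality above, produces $1+(e-1)/(e+1)=2e/(e+1)$, which is precisely the real boundary point of $\Delta_{SG}$. Thus $zf_w'/f_w$ touches $\partial\Delta_{SG}$, confirming that $r_w$ cannot be enlarged.
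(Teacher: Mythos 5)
Your proposal is correct and follows essentially the same route as the paper: writing $f(z)=zp(z)$ with $p\in\mathcal{P}_n$, bounding $|zp'(z)/p(z)|$ via Lemma~\ref{lemmab}, applying Lemma~\ref{main} with center $a=1$, solving the resulting quadratic in $r^n$, and using the same extremal function $f_w(z)=z(1+z^n)/(1-z^n)$. Your sharpness step is in fact slightly more explicit than the paper's, since you verify that $zf_w'(z)/f_w(z)$ attains the boundary value $2e/(e+1)$ of $\Delta_{SG}$ at $z=r_w$ rather than merely asserting contact.
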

\begin{proof}
  Let $f\in\mathcal{W}_n,$ then there exists some $h\in\mathcal{P}_n$ such that $h(z)=f(z)/z.$ Using Lemma~\ref{lemmab}, we have
  \begin{equation*}
    \left|\frac{zh'(z)}{h(z)}\right|\leq \frac{2nr^n}{1-r^{2n}}.
  \end{equation*}
  This further implies that
  \begin{equation*}
    \left|\frac{zf'(z)}{f(z)}-1\right|=\left|\frac{zh'(z)}{h(z)}\right|\leq \frac{2nr^n}{1-r^{2n}}.
  \end{equation*}
  Now, by using~\ref{main} it follows that $f\in\mathcal{S}^*_{SG}$ provided $2nr^n/(1-r^{2n})\leq (e-1)/(e+1).$ This inequality, after a few steps, reduces to the following
  \begin{equation*}
    (e-1)r^{2n}+2n(e+1)r^n-(e-1)\leq 0.
  \end{equation*}
  Solving this inequality gives us $r\leq r_w.$ The sharpness of the result can be verified by the function $f_w(z)=z(1+z^n)/(1-z^n).$ Clearly this function satisfy $f(z)/z\in\mathcal{P}_n$ and thus $f_w\in\mathcal{W}_n.$
  Also, at the points $\pm r_w$ the function $zf'_w/f_w$ touches the boundary of $\Delta_{SG}.$
\end{proof}
For $\beta>1,$ the class $\mathcal{M}(\beta)$ introduced by Uralegaddi et al.~\cite{urlagaddi} is given by
\begin{equation*}
  \mathcal{M}(\beta):=\left\{f\in\mathcal{A}_n:\RE\frac{zf'(z)}{f(z)}<\beta,\;z\in\mathbb{D}\right\}.
\end{equation*}
In terms of subordination, the above class can be written as
\begin{equation*}
  \mathcal{M}(\beta):=\left\{f\in\mathcal{A}_n:\frac{zf'(z)}{f(z)}\prec \frac{1+(1-2\beta)z^n}{1-z^n},\;z\in\mathbb{D}\right\}.
\end{equation*}
\begin{theorem}
  The $\mathcal{S}^*_{SG,n}$ radius for the class $\mathcal{M}(\beta)$ is given by
\begin{equation*}
\mathcal{S}^*_{SG}(\mathcal{M}_n(\beta))=\left(\frac{e-1}{(e-1)+(e+1)(\beta)-1}\right)^{1/n}.
\end{equation*}
The result is sharp for the function $f(z)=z(1-z^n)^{(2(\beta-1)/n)}.$
\end{theorem}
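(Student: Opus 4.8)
The plan is to reduce the membership $f\in\mathcal{M}(\beta)$ to a statement about an ordinary Carath\'eodory function and then transplant the resulting disk into $\Delta_{SG}$ via Lemma~\ref{main}, exactly as in the proofs of the preceding theorems. In Janowski form the subordination $zf'(z)/f(z)\prec(1+(1-2\beta)z^n)/(1-z^n)$ corresponds to $A=1-2\beta$ and $B=-1$; since $\beta>1$ we have $A<-1$, so the hypotheses of Lemma~\ref{Plemma}$(i)$ are not literally met (the quantity $A-B$ is negative) and I would not quote it verbatim. Instead I would introduce the auxiliary function $q(z)=(\beta-zf'(z)/f(z))/(\beta-1)$. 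Because $\RE(zf'(z)/f(z))<\beta$ on $\mathbb{D}$ and $f\in\mathcal{A}_n$ forces $q(0)=1$, one checks immediately that $q\in\mathcal{P}_n$, which legitimises the disk estimate.

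Next I would apply the classical disk bound for $\mathcal{P}_n$, namely Lemma~\ref{Plemma}$(ii)$ with $\alpha=0$, to $q$ and then undo the linear substitution $zf'(z)/f(z)=\beta-(\beta-1)q(z)$. A short computation shows that on $|z|=r$ the ratio $zf'(z)/f(z)$ lies in the disk with center $a=(1-(2\beta-1)r^{2n})/(1-r^{2n})$ and radius $r_0=2(\beta-1)r^n/(1-r^{2n})$. Since $\beta>1$ the center satisfies $a<1$, and therefore $|a-1|=1-a=2(\beta-1)r^{2n}/(1-r^{2n})$.

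With the disk in hand I would invoke Lemma~\ref{main}: for the relevant range of $r$ one has $2/(1+e)<a<2e/(1+e)$, so the disk lies in $\Delta_{SG}$ as soon as $r_0\le(e-1)/(e+1)-|a-1|$. The right-hand side simplifies to $a-2/(1+e)$, so this is precisely the requirement that the leftmost point $a-r_0$ of the disk not fall below the real intercept $2/(1+e)$ of $\partial\Delta_{SG}$. Substituting the expressions for $a$ and $r_0$ and cancelling the common factor $1+r^n$, the inequality collapses to $2(\beta-1)r^n/(1-r^n)\le(e-1)/(e+1)$, which solves to $r\le\bigl((e-1)/((e-1)+2(\beta-1)(e+1))\bigr)^{1/n}$, the asserted bound.

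Finally, for sharpness I would take the extremal $f(z)=z(1-z^n)^{2(\beta-1)/n}$; logarithmic differentiation gives $zf'(z)/f(z)=(1-(2\beta-1)z^n)/(1-z^n)$, confirming $f\in\mathcal{M}(\beta)$. Evaluating at $z$ equal to the claimed radius sends this ratio to the value $2/(1+e)$, which lies on $\partial\Delta_{SG}$, so no larger radius is possible. The only real delicacy is at the outset: because $A<-1$ the problem sits outside the Janowski window of Lemma~\ref{Plemma}$(i)$, and it is the passage to $q\in\mathcal{P}_n$ that makes the disk bound rigorous. Once that is done, recognising that the binding constraint is the single real boundary point $2/(1+e)$ is exactly what upgrades Lemma~\ref{main} from merely sufficient to sharp.
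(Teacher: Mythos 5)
Your proof is correct and is essentially the argument the paper has in mind: the paper omits the proof as ``much similar'' to Theorem~\ref{cstheo}, i.e.\ a disk bound on $zf'/f$ followed by Lemma~\ref{main}, which is exactly what you do --- the only structural differences being that here the centre satisfies $a<1$, so the binding constraint is the left endpoint $2/(1+e)$ rather than the right endpoint $2e/(1+e)$ used in Theorem~\ref{cstheo}, and your passage to $q=(\beta-zf'/f)/(\beta-1)\in\mathcal{P}_n$ cleanly legitimises the disk estimate outside the Janowski window, a point the paper glosses over. One remark is in order: the radius you derive, $\bigl((e-1)/((e-1)+2(e+1)(\beta-1))\bigr)^{1/n}$, does not literally agree with the paper's displayed formula $\bigl((e-1)/((e-1)+(e+1)\beta-1)\bigr)^{1/n}$, but yours is the correct one: the paper's own extremal function $f(z)=z(1-z^n)^{2(\beta-1)/n}$ gives $zf'(z)/f(z)=(1-(2\beta-1)z^n)/(1-z^n)$, which hits the boundary point $2/(1+e)$ of $\Delta_{SG}$ precisely when $r^n=(e-1)/((e+1)(2\beta-1)-2)=(e-1)/((e-1)+2(e+1)(\beta-1))$, so the printed denominator is evidently a typographical corruption of $(e-1)+2(e+1)(\beta-1)$.
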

The proof of the above theorem is omitted here as it is much similar to the proof of Theorem~\ref{cstheo}.
Next we consider the class $\mathcal{C}(\alpha)\;(0\leq\alpha<1),$ of convex functions of order $\alpha.$ Note that this class is a generalization of the class $\mathcal{C},$ which can be obtained by setting $\alpha=0.$
\begin{theorem}
Let $f\in\mathcal{S}^*_{SG},$ then f is convex of order $\alpha$ in $|z|<r_{\mathcal{C}}(\alpha),$ where $r_{\mathcal{C}}(\alpha)\in(0,1)$ is the smallest positive root of
\begin{equation*}
e^r(r+\alpha)-2+\alpha=0.
\end{equation*}
\end{theorem}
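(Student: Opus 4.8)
The plan is to translate the convexity-of-order-$\alpha$ condition into a statement about $p(z):=zf'(z)/f(z)$, which is subordinate to $\phi(z)=2/(1+e^{-z})$ precisely because $f\in\mathcal{S}^*_{SG}$. Logarithmic differentiation of $zf'(z)=f(z)p(z)$ yields the identity
\[
1+\frac{zf''(z)}{f'(z)}=p(z)+\frac{zp'(z)}{p(z)},
\]
so that $f$ is convex of order $\alpha$ at $z$ exactly when $\operatorname{Re}\bigl(p(z)+zp'(z)/p(z)\bigr)>\alpha$. First I would record the elementary computation $z\phi'(z)/\phi(z)=z/(e^z+1)$ and then estimate the two pieces separately via
\[
\operatorname{Re}\Bigl(p(z)+\frac{zp'(z)}{p(z)}\Bigr)\ge \operatorname{Re}\,p(z)-\Bigl|\frac{zp'(z)}{p(z)}\Bigr|.
\]

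For the first term, writing $p=\phi\circ\omega$ with a Schwarz function $\omega$, the subordination forces $p(z)\in\phi(\overline{\mathbb{D}}_r)$ on $|z|=r$; since $\operatorname{Re}\phi$ is harmonic its minimum over the closed disc lies on the circle, and a direct check of $\operatorname{Re}\phi(re^{i\theta})$ shows the minimum is attained at $z=-r$, giving $\operatorname{Re}\,p(z)\ge\phi(-r)=2/(1+e^r)$. For the second term the target bound is $|zp'(z)/p(z)|\le re^r/(1+e^r)$, which equals the value of $|z\phi'(z)/\phi(z)|=r/|e^z+1|$ at $z=-r$, where $|e^z+1|$ is minimal on $|z|=r$. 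Combining the two gives
\[
\operatorname{Re}\Bigl(1+\frac{zf''(z)}{f'(z)}\Bigr)\ge \frac{2}{1+e^r}-\frac{re^r}{1+e^r}=\frac{2-re^r}{1+e^r}.
\]

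The requirement $\operatorname{Re}(\cdots)\ge\alpha$ then reduces to $2-re^r\ge\alpha(1+e^r)$, i.e. $e^r(r+\alpha)\le 2-\alpha$. Setting $h(r)=e^r(r+\alpha)-2+\alpha$, one has $h(0)=2(\alpha-1)<0$, $h'(r)=e^r(r+\alpha+1)>0$, and $h(1)=(e-2)+\alpha(e+1)>0$, so $h$ has a unique zero $r_{\mathcal{C}}(\alpha)\in(0,1)$ and the displayed inequality holds exactly for $r\le r_{\mathcal{C}}(\alpha)$, which is the asserted equation. The estimate is moreover witnessed by the extremal function $k_{SG}$ defined by $zk_{SG}'(z)/k_{SG}(z)=\phi(z)$: at $z=-r$ both bounds above hold with equality, so $1+zk_{SG}''/k_{SG}'$ equals $(2-re^r)/(1+e^r)$ there.

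The step I expect to be the genuine obstacle is the sharp estimate $|zp'(z)/p(z)|\le re^r/(1+e^r)$. Using $zp'(z)/p(z)=z\omega'(z)/(e^{\omega(z)}+1)$ together with the Schwarz--Pick inequality $|\omega'(z)|\le(1-|\omega(z)|^2)/(1-r^2)$ and the crude bound $|e^{\omega}+1|\ge 1+e^{-|\omega|}$ is \emph{not} sufficient, since the resulting quantity can exceed $re^r/(1+e^r)$ for some $r$: the worst case for Schwarz--Pick (where $|\omega|$ is small) and the worst case for $|e^{\omega}+1|$ (where $|\omega|=r$) cannot occur simultaneously. The normalization $\omega(0)=0$ must be exploited to show that the rotation $\omega(z)=e^{i\gamma}z$ is extremal, reducing the computation to $\max_{|z|=r}|z/(e^z+1)|$; I would also need to verify rigorously that both $\operatorname{Re}\phi$ and $|z\phi'/\phi|$ are optimized at the single boundary point $z=-r$.
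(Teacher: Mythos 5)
You have isolated exactly the right crux, and you are in fact more careful than the paper: the paper's own proof passes, in a single unexplained chain of inequalities, from $\RE\bigl(2/(1+e^{-\omega(z)})\bigr)-\bigl|\omega'(z)e^{-\omega(z)}/(1+e^{-\omega(z)})\bigr|$ to $2/(1+e^r)-re^r/(1+e^r)$, offering no justification for the derivative estimate (and even dropping a factor of $z$ that should multiply $\omega'(z)$). Your observation that Schwarz--Pick combined with the lower bound on $|e^{\omega}+1|$ cannot deliver $|zp'(z)/p(z)|\le re^r/(1+e^r)$ -- because the two worst cases occur at incompatible values of $|\omega(z)|$ -- is correct. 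However, the repair you propose, namely that the normalization $\omega(0)=0$ forces rotations $\omega(z)=e^{i\gamma}z$ to be extremal, cannot succeed: the inequality you are trying to prove is false, and in fact so is the theorem itself.

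Concretely, take $r=0.85$, $s=0.8$, $\rho=s/r$, and let $B=D\circ C$, where $C(z)=(z+r)/(1+rz)$ and $D(w)=(\rho-w)/(1-\rho w)$; then $B$ is a disc automorphism with $B(-r)=\rho$ and $B'(-r)=-(1-\rho^{2})/(1-r^{2})$. The function $\omega(z)=zB(z)$ is a Schwarz function with $\omega(-r)=-s$ and $\omega'(-r)=\rho+r(1-\rho^{2})/(1-r^{2})$, so for the corresponding $f\in\mathcal{S}^*_{SG}$ defined by $zf'(z)/f(z)=p(z)=2/(1+e^{-\omega(z)})$ one finds, at $z=-r$,
\begin{equation*}
\frac{zp'(z)}{p(z)}=\frac{z\omega'(z)}{e^{\omega(z)}+1}
=-\Bigl(s+\frac{r^{2}-s^{2}}{1-r^{2}}\Bigr)\frac{e^{s}}{1+e^{s}}\approx-0.757,
\end{equation*}
whose modulus already exceeds $re^{r}/(1+e^{r})\approx 0.595$, and worse,
\begin{equation*}
\RE\Bigl(1+\frac{zf''(z)}{f'(z)}\Bigr)
=\frac{2}{1+e^{s}}-\Bigl(s+\frac{r^{2}-s^{2}}{1-r^{2}}\Bigr)\frac{e^{s}}{1+e^{s}}
\approx-0.137<0.
\end{equation*}
Thus this $f\in\mathcal{S}^*_{SG}$ fails to be convex at a point of modulus $0.85<r_{\mathcal{C}}(0)\approx 0.8526$, so no argument along your lines (or the paper's) can be completed: the extremal Schwarz function for the term $zp'/p$ is not a rotation, and exploiting $\omega(0)=0$ only buys $|\omega(z)|\le|z|$, not $|z\omega'(z)|\le|z|$. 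Optimizing the above family over $s$ shows convexity already fails for every $r\ge 0.75$ (approximately), so the true radius of convexity of $\mathcal{S}^*_{SG}$ is at most about $0.75$; establishing it requires minimizing $2e^{-s}-s-(r^{2}-s^{2})/(1-r^{2})$ in $s$, i.e.\ a genuine two-variable extremal problem, not the pointwise bounds used here.
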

\begin{proof}
Let $f\in\mathcal{S}^*_{SG},$ then $zf'(z)/f(z)\prec 2/(1+e^{-z})$ and thus there exists a function $\omega$ with $\omega(0)=0$ and $|\omega(z)|<1$ such that
\begin{equation*}
\frac{zf'(z)}{f(z)}=\frac{2}{1+e^{-\omega(z)}}.
\end{equation*}
Differentiating the above equation logarithmically, we obtain
\begin{equation*}
1+\frac{zf''(z)}{f'(z)}=\frac{2}{1+e^{-\omega(z)}}+\frac{\omega'(z)e^{-\omega(z)}}{1+e^{-\omega(z)}}.
\end{equation*}
Note that
\begin{eqnarray*}
\RE\left(1+\dfrac{zf''(z)}{f'(z)}\right) &= &\RE\left(\dfrac{2}{1+e^{-\omega(z)}}+\dfrac{\omega'(z)e^{-\omega(z)}}{1+e^{-\omega(z)}}\right)\\
& \geq & \RE\left(\dfrac{2}{1+e^{-\omega(z)}}\right)-\left|\dfrac{\omega'(z)e^{-\omega(z)}}{1+e^{-\omega(z)}}\right|\\
& \geq & \dfrac{2}{1+e^r}-\dfrac{re^r}{1+e^r},
\end{eqnarray*}
which is greater than $\alpha$ provided $r\leq r_{\mathcal{C}}(\alpha).$ The result is sharp for the function
\begin{equation*}
	f_0(z)=z\exp{\left(\int_0^z \dfrac{e^t-1}{t(e^t+1)}dt\right)}=z + \dfrac{z^2}{2} + \dfrac{z^3}{8} + \dfrac{z^4}{144} - \dfrac{5 z^5}{1152}+\cdots,
\end{equation*}
\end{proof}
\begin{corollary}
The sharp radius of convexity for the functions in $\mathcal{S}^*_{SG}$ is $r\approx 0.852606.$
\end{corollary}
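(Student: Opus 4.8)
The plan is to convert the membership $f\in\mathcal{S}^*_{SG}$ into a statement about a Schwarz function and then estimate $\RE(1+zf''/f')$ directly. First I would record that there is a Schwarz function $\omega$ with $\omega(0)=0$ and $|\omega(z)|\le|z|$ such that
\[
\frac{zf'(z)}{f(z)}=\frac{2}{1+e^{-\omega(z)}}.
\]
Writing $p=zf'/f$ and using the identity $1+zf''/f'=p+zp'/p$, logarithmic differentiation of the display above yields
\[
1+\frac{zf''(z)}{f'(z)}=\frac{2}{1+e^{-\omega(z)}}+\frac{z\,\omega'(z)\,e^{-\omega(z)}}{1+e^{-\omega(z)}},
\]
where the factor $z$ in the second term comes from the $zp'/p$ contribution.

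Next I would bound the real part from below by splitting off the second term,
\[
\RE\!\left(1+\frac{zf''(z)}{f'(z)}\right)\ge\RE\frac{2}{1+e^{-\omega(z)}}-\left|\frac{z\,\omega'(z)\,e^{-\omega(z)}}{1+e^{-\omega(z)}}\right|.
\]
For the first term I would use $2/(1+e^{-w})=1+\tanh(w/2)$ together with $|\omega(z)|\le r$ on $|z|=r$ and the minimum principle to show its minimum over $|\omega|\le r$ is attained at $w=-r$, namely $2/(1+e^r)$. If one could also establish the uniform derivative estimate that the second term is at most $re^r/(1+e^r)$, then $\RE(1+zf''/f')\ge(2-re^r)/(1+e^r)$, and this lower bound exceeds $\alpha$ exactly when $e^r(r+\alpha)-2+\alpha<0$; since the left-hand side is increasing in $r$ and vanishes at $r_{\mathcal{C}}(\alpha)$, convexity of order $\alpha$ would follow for $r<r_{\mathcal{C}}(\alpha)$. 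For sharpness I would use $f_0(z)=z\exp\int_0^z\frac{e^t-1}{t(e^t+1)}\,dt$, for which $\omega(z)=z$ and $zf_0'/f_0=2/(1+e^{-z})$; evaluating $1+zf_0''/f_0'$ at $z=-r$ returns precisely $(2-re^r)/(1+e^r)$, which meets $\alpha$ at $r=r_{\mathcal{C}}(\alpha)$.

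The hard part, and the step I would scrutinise most, is the derivative estimate $|z\omega'e^{-\omega}/(1+e^{-\omega})|\le re^r/(1+e^r)$. A term-by-term argument via the Schwarz--Pick inequality $|\omega'(z)|\le(1-|\omega(z)|^2)/(1-r^2)$ is not enough, because $|z\omega'(z)|$ can be large exactly where $\RE(2/(1+e^{-\omega}))$ is large, so the two effects must be balanced rather than bounded separately. Concretely, choosing $\omega(z)=z(z-i\rho)/(1-\overline{i\rho}\,z)$ so that $\omega(i\rho)=0$ makes the first term equal to $1$ while forcing the second term up to $\rho^2/(2(1-\rho^2))$, which already surpasses $re^r/(1+e^r)$; this suggests that the identity $\omega=\mathrm{id}$ may not be the extremal Schwarz function and that the true radius could be smaller than $r_{\mathcal{C}}(\alpha)$. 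I would therefore treat the problem as a genuine two-variable optimisation over the value $\omega(z)$ and the constrained modulus and phase of $z\omega'(z)$, and verify the location of the extremum carefully before trusting the stated bound.
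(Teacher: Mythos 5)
Your proposal follows exactly the paper's own route: the Schwarz-function representation $zf'(z)/f(z)=2/(1+e^{-\omega(z)})$, logarithmic differentiation, the splitting
$\RE(1+zf''/f')\ge\RE\bigl(2/(1+e^{-\omega})\bigr)-\bigl|z\omega'e^{-\omega}/(1+e^{-\omega})\bigr|$,
the two separate extremal estimates, and sharpness via $f_0$. (You even silently correct a typo: the paper's displayed identity omits the factor $z$ in the second term.) The one step you refuse to assert --- the derivative estimate $\bigl|z\omega'(z)e^{-\omega(z)}/(1+e^{-\omega(z)})\bigr|\le re^r/(1+e^r)$ --- is precisely the step the paper states with no justification at all; it amounts to assuming $|z\omega'(z)|\le r$, which holds for $\omega(z)=cz$, $|c|\le 1$, but not for general Schwarz functions. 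So your scepticism is not a defect of your write-up; it locates a genuine error in the paper.

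In fact your Blaschke example does more than cast doubt: carried one step further, it disproves the theorem and hence the sharpness claim of this corollary. Take any $\rho\in(\sqrt{2/3},\,0.852606)$, put $\omega(z)=z(z-i\rho)/(1+i\rho z)$, and let $f_\rho\in\mathcal{A}$ be defined by $zf_\rho'(z)/f_\rho(z)=2/(1+e^{-\omega(z)})$; since $\omega$ is a Schwarz function, $f_\rho\in\mathcal{S}^*_{SG}$. At $z=i\rho$ one has $\omega(i\rho)=0$ and $z\omega'(z)\big|_{z=i\rho}=-\rho^2/(1-\rho^2)$, so
\begin{equation*}
1+\frac{zf_\rho''(z)}{f_\rho'(z)}\bigg|_{z=i\rho}
=1-\frac{\rho^2}{2(1-\rho^2)}<0
\quad\text{whenever }\rho>\sqrt{2/3}\approx 0.8165.
\end{equation*}
Hence $f_\rho$ fails to be convex in every disk $|z|<r$ with $r>\rho$, and the radius of convexity of the class $\mathcal{S}^*_{SG}$ is at most $\sqrt{2/3}\approx 0.8165<0.852606$. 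The number $0.852606$ (the root of $re^r=2$) is only the radius of convexity of the single function $f_0$, i.e.\ an upper bound for the class radius; the paper's lower-bound argument cannot be repaired so as to reach it. A correct treatment would use the sharp estimate $|z\omega'(z)|\le|\omega(z)|+\bigl(r^2-|\omega(z)|^2\bigr)/(1-r^2)$, obtained by applying Schwarz--Pick to $\omega(z)/z$ (note that your example saturates it at $\omega(z)=0$), and then carry out exactly the two-variable optimisation over the value $\omega(z)$ and the quantity $z\omega'(z)$ that you describe in your final paragraph; whatever radius that optimisation yields is at most $\sqrt{2/3}$, so the corollary as printed is false and your inability to complete the proof is not a gap on your side.
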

%\section*{Declarations}
%
%\section*{Funding}
%The present work is supported by the Council of Scientific and Industrial Research(CSIR). Ref.No.:08/133(0018)/2017-EMR-I
%\section*{Conflict of interest}
%The authors declare that they have no conflict of interest.
%\section*{Availability of data and material}
%Not Applicable
%\section*{Code availability}
%Not Applicable
%\section*{Author's Contribution}
%Both the authors have equal contribution.

\end{document}